\definecolor{blue3}{RGB}{1,180,255}
\newtheorem{theorem}{Theorem}
\newtheorem{proposition}[theorem]{Proposition}%
\newtheorem{definition}{Definition}%
\newtheorem{lemma}{Lemma}%
\begin{document}

\title[Article Title]{Online multidimensional dictionary learning }


\author[1]{\fnm{Ferdaous} \sur{Ait Addi}}\email{f.aitaddi.ced@uca.ac.ma}
\author[1]{\fnm{Abdeslem Hafid} \sur{Bentbib}}\email{a.bentbib@uca.ac.ma}

\author[2]{\fnm{Khalide}\sur{Jbilou}}\email{khalide.jbilou@univ-littoral.fr}

\affil[1]{\orgdiv{Laboratory LAMAI}, \orgname{Faculty of Science and Technology, University Cadi Ayyad }, \orgaddress{\street{Abdelkarim Elkhattabi},
\city{Marrakech}, \postcode{42 000}, \country{Morrocco}}}

\affil[2]{\orgdiv{Laboratoire LMPA }, \orgname{Université du Littoral Côte d’Opale}, \orgaddress{\street{Ferdinand Buisson}, \city{Calais Cedex}, \postcode{ 62228}, 
\country{France}}}


\abstract{Dictionary learning is a widely used technique in signal processing and machine learning that aims to represent data as a linear combination of a few elements from an overcomplete dictionary. In this work, we propose a  generalization of the dictionary learning technique using the t-product framework, enabling efficient handling of multidimensional tensor data. We address the dictionary learning problem through online methods suitable for tensor structures. To effectively address the sparsity problem, we utilize an accelerated Iterative Shrinkage-Thresholding Algorithm (ISTA) enhanced with an extrapolation technique known as Anderson acceleration. This approach significantly improves signal reconstruction results. Extensive experiments prove that our proposed method outperforms existing acceleration techniques, particularly in applications such as data completion.
These results suggest that our approach can be highly beneficial for large-scale tensor data analysis in various domains.}


\keywords{Online dictionary learning, tensors, basis pursuit,  t-product, completion.}


\pacs[MSC Classification]{15A69, 15A72,15A83}

\maketitle
\section{Introduction}
Sparse representations aim to express a dataset as a combination of only a few key components, or atoms, chosen from a larger set called a dictionary. Consider a signal \(y\in \mathbb{R}^{N}\). We can represent \(y\) as:

\[
y = Dx = d_1x_1 + d_2x_2 + \dots + d_Mx_M,
\]

where \(D = [d_1, d_2, \dots, d_M] \in \mathbb{R}^{N \times M}\) is our dictionary, with \(M > N\), and \(x = [x_1, \dots, x_M]\) is the sparse representation. Essentially, this means we’re expressing \(y\) as a sum of a few components from \(D\), and the goal is to keep \(x\) as sparse as possible, only a few of its elements should be non-zero.

The challenge here is finding the sparse representation, given the signal \(y\) and the dictionary \(D\). This boils down to solving an optimization problem:

\[
\min_x \| Dx - y \|_F^2 \quad \text{such that} \quad \| x \|_0 \leq K,
\]
where \(\| . \|_0\) counts the number of non-zero elements in \(x\), and \(K\) specifies the maximum number of dictionary atoms allowed to represent the signal.

A crucial part of solving this sparse representation problem is choosing the right dictionary \(D\) that leads to the sparsest possible representation. In earlier work, fixed dictionaries like the discrete cosine transform (DCT) or wavelet transform (WT) were commonly used. However, it turns out that learning the dictionary from the data often produces much better results.

Dictionary learning is the technique used to discover a dictionary from a dataset that leads to the most effective sparse representation. Suppose we have a set of signals \(\{ y_i \}_{i=1}^n \in \mathbb{R}^M\) and an overcomplete dictionary \(D \in \mathbb{R}^{M \times I}\). For each signal \(y_i\) (\(i = 1, 2, \dots, n\)), we can find a sparse representation in \(D\) such that:

\[
Dx_i = y_i \quad \text{with} \quad \| x_i \|_0 \leq K \quad \text{for} \quad i = 1, \dots, n.
\]
The goal of dictionary learning is to find the optimal dictionary \(D\) that allows for the sparsest representations of the signals in our dataset. This is done by solving the following optimization problem:

\[
\min_{D, x_i} \sum_{i=1}^n \| Dx_i - y_i \|_F^2 \quad \text{such that} \quad \| x_i \|_0 \leq K.
\]

In this paper, we aim to extend the dictionary learning model to handle multidimensional signals using tensors and the \(t\)-product operation. $T$-product is a tensor tensor product first defined in  \cite{kilmer}\cite{KILMER2011} as generalization of matrix matrix product Previous works \cite{soltani}\cite{zhang}, focused only on 2D signals. We expand their models to accommodate signals of arbitrary dimensions. Additionally, while those works  often employed batch processing (such as TKSVD), we  introduce two online methods \cite{liu}\cite{mairal} based on projected stochastic gradient descent to accelerate the learning process.\\
To solve the completion problem, we generalize an acceleration method known as Anderson acceleration\cite{brezinski}\cite{walker}, which speeds up the iterative soft-thresholding algorithm (ISTA). Our experiments show that this method outperforms other acceleration techniques.

This paper is organized as follows: Section \ref{sec2} introduces the notations and reviews relevant definitions and propositions. Section \ref{sec3} details the online   dictionary learning approach. Section \ref{sect4} introduces the generalized dictionary learning model and presents two  online learning methods, specifically the projected stochastic gradient descent algorithm and a second order algorithm based on the ISTA algorithm, which is used for online dictionary learning. Section \ref{MCO} demonstrates the application of the proposed dictionary learning methods in solving matrix completion problems additionally, it covers a generalization of Anderson Acceleration applied to ISTA in the sparse coding step. Finally, Section \ref{NumR} presents numerical results that showcase the performance of all the algorithms discussed, and Section \ref{sec8} concludes the paper.

\section{Notations and definitions}\label{sec2}

\subsection{Notations}
In what follows, matrices are denoted by uppercase letters $X$, vectors are denoted by lowercase letters $x$, and tensors are denoted by calligraphic letters $\mathcal{X}$. The $(i_{1},i_{2},\dots,i_{N})$ elements of the tensor $\mathcal{X}$ is denoted by $\mathcal{X}_{i_{1},i_{2},\dots,i_{N}}$, we denote the cardinality of a set of indices $\mathcal{I}$ by $|\mathcal{I}|$. We denote by $[I]$ the set of indices $\{1,2,\dots,I\}$.

The following section presents definitions that will be used throughout this work. 
\subsection{Definitions}
\begin{definition}\cite{Kolda2}\cite{Kolda}
	The inner product of two tensors $\mathcal{X}, \mathcal{Y} \in \mathbb{R}^{I_{1}\times I_{2}\times\dots \times I_{N}}$  is defined by
	\[ \langle\mathcal{X},\mathcal{Y}\rangle=\sum_{i_{1}=1}^{I_{1}}\sum_{i_{2}=1}^{I_{2}}\cdots\sum_{i_{N}=1}^{I_{N}} \mathcal{X}_{i_{1},i_{2},\dots,i_{N}}\mathcal{Y}_{i_{1},i_{2},\dots,i_{N}}.  \]
\end{definition}
The norm of a tensor $\mathcal{X} \in \mathbb{R}^{I_{1}\times I_{2}\times\dots \times I_{N}}$ is: 
\[ \Vert \mathcal{X} \Vert_{F}= \sqrt{\langle\mathcal{X},\mathcal{X}\rangle}. \]
\begin{definition} We define the \textbf{unfold} operation, denoted as \textbf{unfold(.)}, to transform an \( N \)-th order tensor \( \mathcal{A} \in \mathbb{R}^{I_{1}\times I_{2} \times \dots \times I_{N}} \) into an \( I_{1}I_{N}\times I_{2} \dots\times I_{N-1} \) block tensor as follows
		\[ unfold(\mathcal{A})=\left[\begin{array}{c}
			\mathcal{A}_{1} \\
			\mathcal{A}_{2} \\
			\vdots \\
			\mathcal{A}_{I_{N}}
		\end{array}\right]. \]
If $\mathcal{A}$ is an 3-order tensor, then  \textbf{unfold($\mathcal{A}$)} is a block vector. The operation that takes \textbf{unfold(.)} back to tensor form is the \textbf{fold(.)} command. Specifically, \textbf{fold(.,$I_{N}$)} takes an $I_{1}I_{N}\times I_{2}\times\dots\times I_{N-1}$ block tensor and returns an $I_{1}\times\dots\times I_{N}$ tensor. That is
\[\textbf{fold}(\textbf{unfold}(\mathcal{A}),I_{N}).\]
 \end{definition}
 We define the frontal slices of a N-th order tensor $\mathcal{X}\in \mathbb{R}^{I_{1} \times I_{2} \times I_{3} \times \dots \times I_{N}}$ as \cite{Bentbib}:
 \begin{equation}\label{frslices}
 	\mathcal{X}^{(p)}=\mathcal{X}(:,:,k_{3},k_{4},\dots,k_{N})~~ \text{with}~~ p=k_{3}+\sum_{i=4}^{N}(k_{i}-1)\prod_{s=3}^{i-1}I_{s}.
 \end{equation}
 
In \cite{kilmer} the t-product operation is defined as a product between two third-order tensors  $\mathcal{A}\in \mathbb{R}^{I_{1}\times I_{2}\times I_{3}}$ and  $\mathcal{B}\in \mathbb{R}^{I_{2}\times l\times I_{3}}$:
\[\mathcal{A}\ast_{t}\mathcal{B}=\textbf{fold}(circ(\mathcal{A}).\textbf{unfold}(\mathcal{B})),\]
such that $circ(\mathcal{A})$ is defined as the block circulant matrix created from the frontal slices of the tensor $\mathcal{A}$:

\[circ(\mathcal{A})=\left[	\begin{array}{ccccc}
	A_{1} & A_{I_{3}} & A_{I_{3}-1}& \dots & A_{2} \\
	A_{2} & A_{1} & A_{I_{3}} & \cdots & A_{3} \\
	\vdots & \ddots & \ddots & \ddots & \vdots \\
	A_{I_{3}} & A_{I_{3}-1} & \cdots & A_{2} & A_{1}
\end{array}\right],\]

where $A_{i}=\mathcal{A}(:,:,i)$ for $i=1\dots I_{3}$.
The matrix 	$unfold(\mathcal{B})$ is defined as a block vector that contains the  frontal slices of the tensor $\mathcal{B}$:
\[ unfold(\mathcal{B})=\left[\begin{array}{c}
	B_{1} \\
	B_{2} \\
	\vdots \\
	B_{I_{3}}
\end{array}\right]. \]
The generalized t-product was defined as an 
$N$-th order tensor in \cite{MARTIN}. The definition is as follows:

\begin{definition}\label{tproduct}\cite{MARTIN}
	Let consider $\mathcal{A}\in \mathbb{R}^{I_{1}\times I_{2}\times I_{3}\times\dots\times I_{N}}$ and  $\mathcal{B}\in \mathbb{R}^{I_{2}\times l\times I_{3}\times \dots \times I_{N}}$.  The t-product operation between the two tensors of order $(p \geq 3)$  is defined recursively as:
	\begin{equation}\label{eq1}
		\mathcal{A}\ast_{t}\mathcal{B}=fold(circ(\mathcal{A})\ast_{t} unfold(\mathcal{B})),
	\end{equation}
	
	such that the tensor product $\mathcal{A}\ast_{t}\mathcal{B}\in \mathbb{R}^{I_{1}\times l\times I_{3}\times\dots\times I_{N}}$.
\end{definition}
The right-hand side involves a t-product of \((N-1)\)-th order tensors, with each successive t-product operation reducing the order of the tensors by one.

\[\left[	\begin{array}{ccccc}
	\mathcal{A}_{1} & \mathcal{A}_{I_{N}} & \mathcal{A}_{I_{N}-1}& \dots & \mathcal{A}_{2} \\
	\mathcal{A}_{2} & \mathcal{A}_{1} & \mathcal{A}_{I_{N}} & \cdots & \mathcal{A}_{N} \\
	\vdots & \ddots & \ddots & \ddots & \vdots \\
	\mathcal{A}_{I_{N}} & \mathcal{A}_{I_{N}-1} & \cdots & \mathcal{A}_{2} & \mathcal{A}_{1}
\end{array}\right]\ast_{t} \left[\begin{array}{c}
	\mathcal{B}_{1} \\
	\mathcal{B}_{2} \\
	\vdots \\
	\mathcal{B}_{I_{N}}
\end{array}\right], \]
where $\mathcal{A}_{i}=\mathcal{A}(:,:,\cdots,i)$, and $\mathcal{B}_{i}=\mathcal{B}(:,:,\cdots,i)$ for $i=1,\dots I_{N}$.

However, while multiplying two tensors by expanding them into blocks of block
circulant matrices leads to a clean recursive algorithm, it is very
costly in terms of memory and computational speed. It is much faster to convert to
the Fourier domain for the multiplication \cite{MARTIN}.
Specifically if $\mathcal{A}\in \mathbb{R}^{n_{1}\times n_{2}\times n_{3}}$ and $F_{n_{3}}\in \mathbb{R}^{ n_{3}\times n_{3}}$ is the discrete Fourier transform (DFT) matrix, then
	$$
	\left(F_{n_3} \otimes I_{n_1}\right) \cdot \operatorname{circ}(\operatorname{unfold}(\mathcal{A})) \cdot\left(F_{n_3}^* \otimes I_{n_2}\right)=\left[\begin{array}{llll}
		D_1 & & & \\
		& D_2 & & \\
		& & \ddots & \\
		& & & D_{n_3}
	\end{array}\right].
	$$
Where " $\otimes$ " denotes the Kronecker product " $F$ " denotes the conjugate transpose of $F$ and "." means standard matrix product. Therefore, the t-product for third-order tensors can be computed by folding back to a tensor the following:
$$
\left(F_{n_3}^* \otimes I_{n_1}\right)\left(\left(F_{n_3} \otimes I_{n_1}\right) \cdot \operatorname{circ}(\operatorname{unfold}(\mathcal{A})) \cdot\left(F_{n_3}^* \otimes I_{n_2}\right)\right)\left(F_{n_3} \otimes I_{n_2}\right) \operatorname{unfold}(\mathcal{B}).
$$

See \cite{MARTIN} for the generalized case of the t-product.
The computation of the t-product operation between two third-order tensors is shown in Algorithm \ref{t-product}.
\begin{algorithm}[H]
	\caption{ t-product of real third-order tensors}
	\begin{algorithmic}[1] 
		
		\Require $\mathscr{A} \in \mathbb{R}^{\ell \times q \times n}, \mathscr{B} \in \mathbb{R}^{q \times p \times n}$.
		\Ensure $\mathscr{C}=\mathscr{A} * \mathscr{B} \in \mathbb{R}^{\ell \times p \times n}$.
		\State 
		 $\widehat{\mathscr{A}}=\operatorname{fft}(\mathscr{A},[], 3)$, and $\widehat{\mathscr{B}}=\mathrm{fft}(\mathscr{B},[], 3)$
		\For {$i=1,2$, to $\left[\frac{n+1}{2}\right]$} \\ 
		$\widehat{\mathscr{C}}^{(i)}=\widehat{\mathscr{A}}^{(i)} \widehat{\mathscr{B}}^{(i)}$
		\EndFor
		
	\For {$i=\left[\frac{n+1}{2}\right]+1$ to $n$} \\
	$\widehat{\mathscr{C}}^{(i)}=\operatorname{conj}\left(\widehat{\mathscr{C}}^{(n-i+2)}\right)$
	\EndFor
		\State  $\mathscr{C}=\operatorname{ifft}(\widehat{\mathscr{C}},[], 3)$
	\end{algorithmic}
	\label{t-product}
\end{algorithm} 

\begin{definition}\cite{mairal}
	Consider a third order tensor $\mathcal{A}\in \mathbb{R}^{I_{1}\times I_{2}\times I_{3}}$, then the transpose $\mathcal{A}^{T}\in \mathbb{R}^{I_{2}\times I_{1}\times I_{3}}$  is obtained by transposing each frontal slice and then reversing the order of transposed frontal slices 2 through $I_{3}$.
\end{definition}
The order $p$-transpose defined recursively based on definition  above:
\begin{definition}\cite{MARTIN}
	Consider a tensor $\mathcal{A}\in \mathbb{R}^{I_{1}\times I_{2}\times\dots \times I_{N}}$, then the transpose $\mathcal{A}^{T}\in \mathbb{R}^{I_{2}\times I_{1}\times \dots \times I_{N}}$  is obtained by transposing each $\mathcal{A}_{i}$ $i=1\dots I_{N}$ and then reversing the order of $\mathcal{A}_{i}$ 2 through $I_{N}$
	\[\mathcal{A}^{T}=fold\left(\left[\begin{array}{c}
		\mathcal{A}^{T}_{1}  \\
		\mathcal{A}^{T}_{I_{N}}\\
		\mathcal{A}^{T}_{I_{N}-1} \\
		\vdots \\
		\mathcal{A}^{T}_{2} 
	\end{array}\right]\right). \]
\end{definition}
\begin{definition}
	The $3$-order  identity tensor $\mathcal{I}_{nnl}\in \mathbb{R}^{n\times n\times l}$, is the tensor whose first frontal slice is the identity matrix and whose other frontal slices are all zeros.
\end{definition}
The $p$-order identity tensor is a generalization of the definition above.

\begin{definition}\cite{MARTIN} The \(p\)-order identity tensor \(\mathcal{I} \in \mathbb{R}^{n \times n \times l_{1} \times \dots \times l_{p-2}}\), \(p > 3\), is the tensor such that: 1. \(\mathcal{I}_{1}\) is the \((p-1)\)-order identity tensor. 2. For \(j = 2, \dots, l_{p-2}\), \(\mathcal{I}_{j}\) is the \((p-1)\)-order zero tensor, where \(\mathcal{I}_{j} = \mathcal{I}(:, :, \cdots, j)\)  for $j=2,\dots,l_{p-2}$.  \end{definition}
\begin{definition}
Consider a tensor $\mathcal{A}\in \mathbb{R}^{I_{1}\times I_{1}\times I_{3}\times \dots \times I_{N}}$, the tensor $\mathcal{A}$ is symmetric under the t-product if 
\[ \mathcal{A}^{T}=\mathcal{A}.\] 
\end{definition}

\begin{definition}\cite{Kilmer2013}
The tensor $\mathcal{A}\in \mathbb{R}^{I_{1}\times I_{1}\times\times I_{3} \dots \times I_{N}}$ is positive definite if each frontal slice $\hat{\mathcal{A}}^{(i)}$ in the fourier domain are Hermitian positive definite.
\end{definition}

\section{Online Dictionary Learning}\label{sect4}

Online dictionary learning is a computational technique commonly used in machine learning and signal processing to dynamically construct a dictionary of representative features directly from incoming data \cite{mairal}\cite{liu}. Unlike traditional methods, which process the entire dataset at once, online dictionary learning updates the dictionary incrementally as new data points are introduced. This approach is highly efficient for large datasets and allows for real-time adaptation to changes in the underlying data distribution.

In this framework, the following loss function is used to evaluate the quality of the learned dictionary:

\[
f(D; y) = \min_{x} \ell(x, D;y),
\]
where
\[
\ell(x, D;y) = \frac{1}{2}\|Dx - y\|_2^2 + \lambda \|x\|_1.
\]

Here, \(y\) represents a data point from the distribution. The term \(\|Dx - y\|_2^2\) measures the reconstruction error, while \(\lambda \|x\|_1\) enforces sparsity on the coefficients \(x\), balancing reconstruction accuracy with sparse representation.

The dictionary \(D\) is constrained by the set:
\[
C = \{ D \in \mathbb{R}^{M \times I} : \|d_j\|_2 = 1 \text{ for } j = 1, \dots, I \}.
\]
This constraint ensures that each column of \(D\) (denoted as \(d_j\)) has a unit \(\ell_2\)-norm, stabilizing the learning process and preventing trivial solutions.

The goal of dictionary learning is to solve the following optimization problem:
\[
\min_{D \in C} \frac{1}{N} \sum_{i=1}^{N} f(D; y_i),
\]
where each \(y_i\) is drawn randomly from the dataset.

In an online learning setting, where data points \(\{ y_i \}_{i=1}^{n}\) arrive in a streaming fashion, this problem can be addressed using a first-order method such as projected stochastic gradient descent. The dictionary \(D\) is updated iteratively as follows:

\begin{equation}
	D^{(t)} = \text{Proj}_C \left( D^{(t-1)} - \eta^{(t)} \nabla f(D^{(t-1)}; y^{(t)}) \right),
\end{equation}

where \(\eta^{(t)}\) is the learning rate, and \(\text{Proj}_C\) represents projection onto the constraint set \(C\).

Alternatively, a second-order method can be used to update the dictionary by considering all past information. Specifically, the optimization problem is formulated as follows:

\begin{equation}
	\mathbf{D}^{(t)} = \underset{\mathbf{D} \in \mathrm{C}}{\arg\min} \left\{ \underbrace{\ell\left(\mathbf{D}, \mathbf{x}^{(1)}, \mathbf{y}^{(1)}\right) + \cdots + \ell\left(\mathbf{D}, \mathbf{x}^{(t)}, \mathbf{y}^{(t)}\right)}_{ \mathcal{F}^{(t)}(\mathbf{D})} \right\}.
\end{equation}

In this approach, the  function \(\mathcal{F}^{(t)}(\mathbf{D})\) aggregates all past observations, providing a more accurate update by leveraging the full history of data points \((\mathbf{x}^{(i)}, \mathbf{y}^{(i)})\) for \(i = 1, \dots, t\). This allows for more robust dictionary updates compared to the first-order method, which relies only on the most recent data point. 

In the following section, we present a generalization of these online methods to solve multidimensional problems.

\section{Generalized dictionary learning}\label{sec3}
In this section, we present a generalization of the dictionary learning problem to handle multidimensional data, unlike previous work \cite{zhang}, which is limited to 2D data. 

Consider a set of multidimensional signals, denoted as \(\{ \mathcal{Y}_{i} \}_{i=1}^{n} \subset \mathbb{R}^{M_{1}\times n \times M_{2} \times \dots \times M_{N}}\), and a dictionary \(\mathcal{D} \in \mathbb{R}^{M_{1} \times d \times M_{2} \times \dots \times M_{N}}\) defined as \(\mathcal{D} = [\mathcal{D}_{1}, \dots, \mathcal{D}_{d}]\), where each atom \(\mathcal{D}_{k} \in \mathbb{R}^{M_{1}\times 1 \times M_{2} \times \dots \times M_{N}}\) for \(k = 1, \dots, d\). 

We can express each signal \(\mathcal{Y}_{i}\) as a t-linear combination of the dictionary atoms \(\mathcal{D}_{k}\):
\[
\mathcal{Y}_{i} = \sum_{k=1}^{d} \mathcal{D}_{k} \ast_{t} \mathcal{X}_{k,i} = \mathcal{D} \ast_{t} \mathcal{X}_{i},
\]
where \(\mathcal{X}_{i} = [\mathcal{X}_{1,i}, \dots, \mathcal{X}_{d,i}] \in \mathbb{R}^{d \times 1 \times M_{2} \times \dots \times M_{N}}\) and \(\ast_{t}\) denotes the t-product operation.

The goal is to identify a dictionary that promotes sparsity in the signal representation. This leads us to formulate the multidimensional dictionary learning problem as follows
\[
\min_{\mathcal{D}, \mathcal{X}_{i}} \sum_{i=1}^{n} \left\| \mathcal{D} \ast_{t} \mathcal{X}_{i} - \mathcal{Y}_{i} \right\|_{F}^{2} \quad \text{subject to } \| \mathcal{X}_{i} \|_{TS} \leq T, \quad i = 1, \dots, n.
\]
The tubal sparsity \(\| \cdot \|_{TS}\) is defined as the count of non-zero block tensors \(\mathcal{X}_{i}(k, :, :, \cdots, :)\) in \(\mathcal{X}_{i}\), for \(1 \leq k \leq d\).

Since minimizing \(\| \cdot \|_{TS}\) is NP-hard, we instead use the \(\| \cdot \|_{1}\) norm as a convex relaxation. This norm is defined as:
\[
\| \mathcal{X} \|_{1} = \sum_{i_{1}, i_{2}, \cdots, i_{N}} | \mathcal{X}(i_{1}, i_{2}, \cdots, i_{N}) |.
\]
Thus, the multidimensional dictionary learning problem can be reformulated as:
\begin{equation}\label{DLP1}
	\min_{\mathcal{D}_{k} \in \mathcal{C}, \mathcal{X}_{i}} \sum_{i=1}^{n} \sum_{k=1}^{d} \left\| \mathcal{D}_{k} \ast_{t} \mathcal{X}_{i} - \mathcal{Y}_{i} \right\|_{F}^{2} + \lambda \| \mathcal{X}_{i} \|_{1},
\end{equation}
where \(\mathcal{C} = \{ \mathcal{D} \in \mathbb{R}^{M_{1} \times \dots \times M_{N}} : \| \mathcal{D} \|_{F} = 1 \}\).

Problem \eqref{DLP1} is generally non-convex, though it is convex with respect to each variable \(\mathcal{D}\) and \(\mathcal{X}\) individually. A common solution approach is alternating minimization: first solving the sparse coding problem with the dictionary fixed, then updating the dictionary while keeping the sparse tensor fixed.

\subsection{Generalized Online Dictionary Learning}
Consider the following loss function:
\begin{equation}\label{F}
	\mathcal{F}(\mathcal{D}; \mathcal{Y}) = \min_{\mathcal{X}} \mathcal{L}(\mathcal{X}, \mathcal{D};\mathcal{Y}),
\end{equation}
where
\begin{equation}
	\mathcal{L}(\mathcal{X}, \mathcal{D};\mathcal{Y}) = \frac{1}{2} \Vert \mathcal{D} \ast_{t} \mathcal{X} - \mathcal{Y} \Vert_{F}^{2} + \lambda \Vert \mathcal{X} \Vert_{1}.
\end{equation}
The online multidimensional dictionary learning problem is then formulated as:
\begin{equation}
	\min_{\mathcal{D} \in \mathcal{C}} \frac{1}{N} \sum_{i=1}^{N} \mathcal{F}(\mathcal{D}; \mathcal{Y}_{i}),
\end{equation}
where each $\mathcal{Y}_{i}$ is drawn randomly from the dataset.

As mentioned earlier, this problem is not jointly convex; however, it is convex with respect to each variable individually. Therefore, we adopt an alternating optimization strategy that iterates between sparse coding and dictionary update steps.
In this work, we employ two online methods to solve this problem: a first-order  method  Projected Stochastic Gradient Descent (PSGD), and a second-order method based on the ISTA algorithm.
\subsection{First order method}
\subsubsection{Sparse coding stage}
In the sparse coding stage, we fix the dictionary $\mathcal{D}$ and we consider the following problem:
\begin{equation}\label{SC}
	\min_{ \mathcal{X}} \Vert \mathcal{D}\ast_{t} \mathcal{X}
	-\mathcal{Y}\Vert_{F}^{2}~~ \text{subject to }  \Vert \mathcal{X} \Vert_{0} \leq T.
\end{equation}

To solve the problem, we utilize the orthogonal matching pursuit (OMP)\cite{Elad}\cite{addi} algorithm, a greedy approach that addresses the norm $\Vert . \Vert_{0}$ optimization problem. This algorithm is known for its simplicity and effectiveness. The idea behind OMP is to identify the dictionary atoms that are most correlated with the signal and solve a simple least squares problem based on those atoms. The OMP algorithm is illustrated in algorithm \ref{OMP}.
\begin{algorithm}
	\Large
	\caption{Orthogonal matching pursuit OMP}
	\begin{algorithmic}
		\large
		\Require 
		dictionary $\mathcal{D}$, signal $\mathcal{Y}$, maximum of nonzero elements $K$, tolerance $\epsilon$.
		\Ensure $\mathcal{D}\ast_{t}\mathcal{X}=\mathcal{Y}$, with $\Vert \mathcal{X}\Vert_{0} \leq K $
		\State 1: $\mathcal{I}=[\emptyset]$, $~ ~ \mathcal{R}=\mathcal{Y}, k=1;$
		\While{$ |\mathcal{I}| \leq K_{max} ~ ~ and ~ ~ \Vert \mathcal{R} \Vert_{F} > \varepsilon $}
		\State  3: $[i]=argmax_{[i]}\Vert (\mathcal{D}^{T}(i,:,\cdots,:)\ast_{t}\mathcal{R}) \Vert_{F}; $
		\State 4: $\mathcal{I}=[\mathcal{I},i];$ 
		\State 5: $ \mathcal{A}=argmin_{\mathcal{U}} \Vert \mathcal{Y} -\mathcal{D}(:,\mathcal{I},\cdots,:)\ast_{t}\mathcal{U} \Vert_{F}^{2};$
		\State 6: $ \mathcal{R}=\mathcal{Y}-\mathcal{D}\ast_{t}\mathcal{X}$, with $\mathcal{X}(\mathcal{I},\cdots,:)=\mathcal{A};$
		\State 7: $ k=k+1; $
		\EndWhile
	\end{algorithmic}
	\label{OMP}
\end{algorithm}

The least square problem in Step 5, can be solved using Cholesky factorization. We generalize the method given in \cite{Elad}.
Consider a signal \(\mathcal{Y}\) that has a sparse representation in a dictionary \(\mathcal{D}\):
\begin{align*}
	\mathcal{Y}&=\sum_{k=1}^{K}\mathcal{D}(:,i_{k},:,\dots,:)\ast_{t}\mathcal{X}(i_{k},1,:,\dots,:)\\
	&=\mathcal{D}(:,\mathcal{I},\dots,:)\ast_{t}\mathcal{X}(\mathcal{I},1,\dots,:),
\end{align*} 
such that $\mathcal{I}=[i_{1},\dots,i_{K}]$.
We multiply by $\mathcal{D}^{T}(\mathcal{I},\cdots,:)$, then the problem turns to
\begin{equation}
	\mathcal{D}^{T}(\mathcal{I},\cdots,:)\ast_{t}\mathcal{Y}=(\mathcal{D}^{T}(\mathcal{I},\cdots,:)\ast_{t}\mathcal{D}(:,\mathcal{I},\cdots,:))\ast_{t}\mathcal{X}(\mathcal{I},1,\cdots,:).
\end{equation} 
The tensor  $\mathcal{D}^{T}(\mathcal{I},\cdots,:)\ast_{t}\mathcal{D}(:,\mathcal{I},\cdots,:)$ is symmetric, and positive definite due to the orthogonalization process which ensures the selection of linearly independent atoms.
Then we use the Choleseky factorization to solve the problem.
In each iteration $k$  of OMP we add new block  $D(:,i_{k},:,:)$ to $D(:,\mathcal{I},\cdots,:)$. 
Therefore  the Cholesky factorization of the tensor  $\mathcal{D}^{T}(\mathcal{I},\cdots,:)\ast_{t}\mathcal{D}(:,\mathcal{I},\cdots,:)$ requires only the computation of its last tensor row and tensor column as demonstrated in Theorem  \ref{Cholesky}. 
Then using the Cholesky factorization we can write the equation in Step 5 as
\[\mathcal{L}\ast_{t}\mathcal{L}^{T}\ast_{t}\mathcal{U}=\mathcal{D}^{T}(:,\mathcal{I},\dots,:)\ast_{t}\mathcal{Y},\]
then the solution to Step 5 is given by
\[\mathcal{U}=\mathcal{L}^{-T}\ast_{t}\mathcal{L}^{-1}\ast_{t}(\mathcal{D}^{T}(:,\mathcal{I},\dots,:)\ast_{t}\mathcal{Y}).\]
The following lemma and theorem provide fundamental tools for tensor analysis within the t-product framework. The lemma establishes the existence of tensor square roots for specific tensor structures, while the theorem builds upon this result to derive the Cholesky factorization of block tensors, a crucial step in the OMP algorithm.
\begin{lemma}\label{lem1}
	Let \(\mathcal{B} \in \mathbb{R}^{1 \times 1 \times I_{3} \times \dots \times I_{N}}\) be a tensor with  positive entries. Then, the square root of \(\mathcal{B}\) under the t-product exists and is given by \(\sqrt{\mathcal{B}}\), whose frontal slices are defined as
	\[
	\left(\sqrt{\mathcal{B}}\right)^{(i)} = \sqrt{\mathcal{B}^{(i)}}, \quad \text{for } i = 1, \dots, P,
	\]
	where \(P = I_{3} \cdot I_{4} \cdot \dots \cdot I_{N}\).
\end{lemma}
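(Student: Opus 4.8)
The plan is to pass to the Fourier domain, where the $t$-product diagonalizes into independent operations on the frontal slices, and thereby reduce the existence of a $t$-product square root to the existence of ordinary scalar square roots. Recall the block-diagonalization recalled in Section~\ref{sec2}: conjugating $circ(\mathcal{B})$ by the DFT matrices produces a block-diagonal matrix whose blocks are exactly the Fourier-domain frontal slices $\hat{\mathcal{B}}^{(i)}$. For a tensor of format $1\times 1\times I_3\times\cdots\times I_N$ each such slice is a single scalar, and there are $P=I_3\cdots I_N$ of them. Under this transform the $t$-product becomes a \emph{facewise} product, so for any candidate $\mathcal{S}$ of the same format one has $\widehat{(\mathcal{S}\ast_t\mathcal{S})}^{(i)}=\hat{\mathcal{S}}^{(i)}\hat{\mathcal{S}}^{(i)}=\bigl(\hat{\mathcal{S}}^{(i)}\bigr)^2$.

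First I would record this facewise identity carefully for order $N\ge 3$, either by invoking the generalized Fourier diagonalization of \cite{MARTIN} or by an induction on $N$ that peels off one mode at a time, exactly as the recursive Definition~\ref{tproduct} does. This turns the tensor equation $\mathcal{S}\ast_t\mathcal{S}=\mathcal{B}$ into the $P$ decoupled scalar equations $\bigl(\hat{\mathcal{S}}^{(i)}\bigr)^2=\hat{\mathcal{B}}^{(i)}$ for $i=1,\dots,P$. Next I would solve each of these by setting $\hat{\mathcal{S}}^{(i)}:=\sqrt{\hat{\mathcal{B}}^{(i)}}$ and then define $\mathcal{S}=\sqrt{\mathcal{B}}$ by inverting the DFT. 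By construction $\widehat{(\mathcal{S}\ast_t\mathcal{S})}^{(i)}=\hat{\mathcal{B}}^{(i)}$ for every $i$, and since the DFT is invertible this yields $\mathcal{S}\ast_t\mathcal{S}=\mathcal{B}$, giving existence; the slicewise formula in the statement is then just this prescription, understood in the domain where the product diagonalizes and each slice is a scalar.

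The main obstacle is not the algebra but the two well-posedness points hidden in the phrase "setting $\hat{\mathcal{S}}^{(i)}=\sqrt{\hat{\mathcal{B}}^{(i)}}$": that the scalar square root is available, and that the resulting $\mathcal{S}$ is real. This is precisely where the hypothesis must be used. Positivity of the relevant slices (equivalently, $\mathcal{B}$ being positive definite in the sense of the Fourier-domain definition given earlier) guarantees each $\hat{\mathcal{B}}^{(i)}$ is a positive real, so $\sqrt{\hat{\mathcal{B}}^{(i)}}$ is a well-defined positive real. To conclude that $\mathcal{S}$ is real I would check that the chosen square roots inherit the conjugate symmetry $\hat{\mathcal{B}}^{(i)}=\overline{\hat{\mathcal{B}}^{(\sigma(i))}}$ that a real $\mathcal{B}$ forces on its DFT, where $\sigma$ is the index reflection coming from the $\operatorname{ifft}$; the principal (positive) branch then satisfies $\hat{\mathcal{S}}^{(i)}=\overline{\hat{\mathcal{S}}^{(\sigma(i))}}$, so that $\mathcal{S}=\operatorname{ifft}(\hat{\mathcal{S}})$ has no imaginary part.

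Verifying this symmetry bookkeeping consistently across all $N-2$ transformed modes is the only delicate step; everything else follows immediately from the facewise structure. I would therefore organize the write-up as: (i) the facewise diagonalization lemma for order $N$, (ii) the reduction to scalar equations and their solution, and (iii) the reality check via conjugate symmetry, with uniqueness of the positive-branch root noted as a byproduct. If one only wants existence rather than the explicit real formula, steps (i)-(ii) already suffice.
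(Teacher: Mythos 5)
Your route is genuinely different from the paper's. The paper's proof never leaves the spatial domain: it defines the candidate $\mathcal{C}$ by $\mathcal{C}^{(i)}=\sqrt{\mathcal{B}^{(i)}}$ on the spatial frontal slices defined in \eqref{frslices}, and then asserts that ``it is easy to see'' that $\mathcal{C}\ast_t\mathcal{C}=\mathcal{B}$. That assertion is exactly where the two proofs diverge, and your instinct to pass to the Fourier domain is the sound one: for tensors of size $1\times 1\times I_3\times\cdots\times I_N$ the t-product is a multi-way \emph{circular convolution} of the tube entries, not an entrywise product, so the paper's one-line verification fails as written. For instance, with $N=3$, $I_3=2$ and $\mathcal{B}$ the tube $(1,1)$, the paper's candidate $\mathcal{C}=(1,1)$ gives $\mathcal{C}\ast_t\mathcal{C}=(2,2)\neq\mathcal{B}$. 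The slicewise square-root formula is only valid facewise in the transform domain, which is in fact how the lemma is used inside the proof of Theorem~\ref{Cholesky}, where the square root is taken of the Fourier-domain Schur complement $\hat{\mathcal{C}}^{(i)}-\hat{\mathcal{W}}^{(i)T}\hat{\mathcal{W}}^{(i)}$. Your steps (i)--(ii), plus the conjugate-symmetry argument in (iii) that the paper omits entirely, are the correct skeleton.

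However, your proposal contains one genuine error, located precisely at the point you flagged as delicate: the parenthetical claim that entrywise positivity of $\mathcal{B}$ is \emph{equivalent} to positivity of all Fourier slices. It is not, and the implication you actually need (spatial positivity $\Rightarrow$ Fourier positivity) is false. Take the tube $\mathcal{B}=(1,2)$: its DFT is $(3,-1)$, so $\hat{\mathcal{B}}^{(2)}<0$, your principal-branch prescription leaves the real field, and in fact no real t-product square root exists at all, since $(a,b)\ast_t(a,b)=(a^2+b^2,\,2ab)=(1,2)$ would force $a^2+b^2=1<2=2ab$, contradicting $a^2+b^2\geq 2ab$. Consequently the lemma as literally stated (hypothesis on spatial entries, conclusion on spatial slices) is false, and no proof can close that gap; what your argument proves, correctly, is the repaired statement in which the hypothesis is that every Fourier slice $\hat{\mathcal{B}}^{(i)}$ is a positive real number --- i.e., $\mathcal{B}$ is positive definite in the paper's Fourier-domain sense --- and the slicewise formula is read in the transform domain. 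That repaired version is also exactly what is available and needed where the lemma is invoked in Theorem~\ref{Cholesky}. If you state the hypothesis change explicitly instead of calling it an equivalence, your write-up is a complete and correct proof; as it stands, the ``equivalently'' hides the fact that you have replaced the stated assumption by a strictly different one.
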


\begin{proof}
	Since all entries of \(\mathcal{B}\) are positive, each frontal slice \(\mathcal{B}^{(i)}\) is a positive scalar for \(i = 1, \dots, P\).
	Define the tensor \(\mathcal{C} \in \mathbb{R}^{1 \times 1 \times I_{3} \times \dots \times I_{N}}\) by
	\[
	\mathcal{C}^{(i)} = \sqrt{\mathcal{B}^{(i)}} \quad \text{for } i = 1, \dots, P.
	\]
	It is easy to see that

	\[
	\mathcal{C} *_{t} \mathcal{C} = \mathcal{B}.
	\]
	Thus, \(\mathcal{C}\) is the square root of \(\mathcal{B}\) under the t-product. Therefore,
	\[
	\sqrt{\mathcal{B}} = \mathcal{C}.
	\]
\end{proof}

\begin{theorem}\label{Cholesky}
Consider the symmetric positive definite tensor $\mathcal{A}\in \mathbb{R}^{I_{1}\times I_{1}\times I_{3}\times\dots\times I_{N}} $  such that:
\[ \mathcal{A}=\left[\begin{array}{cc}
	\tilde{\mathcal{A}} & \mathcal{V} \\
	\mathcal{V}^{T} & \mathcal{C}
\end{array}\right],\]
with $\tilde{\mathcal{A}}\in \mathbb{R}^{(I_{1}-1)\times (I_{1}-1)\times I_{3}\times\dots\times I_{N}} $ is symmetric positive definite,  $\mathcal{V}\in \mathbb{R}^{(I_{1}-1)\times 1\times I_{3}\times\dots\times I_{N}}$ a block tensor and $\mathcal{C}\in \mathbb{R}^{1\times 1\times I_{3}\times\dots\times I_{N}}$.
 The Cholesky factorization of $\mathcal{A}$ is given by
\begin{equation}\label{cholesky}
	\mathcal{L}=\left[\begin{array}{cc}
		\tilde{\mathcal{L}} & 0 \\
		\mathcal{W}^{T} & \sqrt{\mathcal{C}-\mathcal{W}^{T}\ast_{t}\mathcal{W}}
	\end{array}\right],
\end{equation}
Where $\tilde{\mathcal{L}}$ is the lower triangular tensor from the Cholesky factorization of  $\tilde{\mathcal{A}}$, and the tensor $\mathcal{W}$ is the solution to:
\[ \tilde{\mathcal{L}}\ast_{t}\mathcal{W}=\mathcal{V}.
\]
\end{theorem}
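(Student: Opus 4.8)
The plan is to verify the claimed factorization directly by forming the t-product $\mathcal{L}\ast_{t}\mathcal{L}^{T}$ in block form and checking that it reproduces each block of $\mathcal{A}$, while using the positive-definiteness hypothesis to guarantee that every object appearing in $\mathcal{L}$ is well defined. First I would record the structural fact that, because the t-product reduces to facewise matrix multiplication in the Fourier domain (as recalled after Definition \ref{tproduct}), it distributes over block partitions exactly as ordinary matrix multiplication does. Consequently a $2\times 2$ block t-product may be expanded blockwise just like a block matrix product, with $\ast_{t}$ replacing the scalar product, and the t-transpose of the lower-block-triangular $\mathcal{L}$ is upper-block-triangular with $(1,1)$ block $\tilde{\mathcal{L}}^{T}$, $(1,2)$ block $\mathcal{W}$, $(2,1)$ block $0$, and $(2,2)$ block $\mathcal{S}^{T}$, where I abbreviate $\mathcal{S}=\sqrt{\mathcal{C}-\mathcal{W}^{T}\ast_{t}\mathcal{W}}$.

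Next I would compute the four blocks of $\mathcal{L}\ast_{t}\mathcal{L}^{T}$. The $(1,1)$ block is $\tilde{\mathcal{L}}\ast_{t}\tilde{\mathcal{L}}^{T}=\tilde{\mathcal{A}}$ by the definition of $\tilde{\mathcal{L}}$; the $(1,2)$ block is $\tilde{\mathcal{L}}\ast_{t}\mathcal{W}=\mathcal{V}$ by the defining equation for $\mathcal{W}$; the $(2,1)$ block is its t-transpose $\mathcal{V}^{T}$; and the $(2,2)$ block is $\mathcal{W}^{T}\ast_{t}\mathcal{W}+\mathcal{S}\ast_{t}\mathcal{S}^{T}$. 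To close this last block I would invoke Lemma \ref{lem1}: since $\mathcal{C}$ is symmetric (it is the trailing diagonal block of the symmetric $\mathcal{A}$) and the Gram tensor $\mathcal{W}^{T}\ast_{t}\mathcal{W}$ is symmetric, their difference is symmetric, hence so is its t-square root, giving $\mathcal{S}^{T}=\mathcal{S}$ and $\mathcal{S}\ast_{t}\mathcal{S}^{T}=\mathcal{C}-\mathcal{W}^{T}\ast_{t}\mathcal{W}$. The $(2,2)$ block therefore collapses to $\mathcal{C}$, and assembling the four blocks yields $\mathcal{L}\ast_{t}\mathcal{L}^{T}=\mathcal{A}$.

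Finally I would establish well-definedness, which is where the positive-definiteness assumption does its real work. Because $\tilde{\mathcal{A}}$ is symmetric positive definite, its Cholesky factor $\tilde{\mathcal{L}}$ exists and is invertible under the t-product (each of its Fourier slices is triangular with nonzero diagonal), so the system $\tilde{\mathcal{L}}\ast_{t}\mathcal{W}=\mathcal{V}$ is uniquely solvable. It then remains to show that $\mathcal{C}-\mathcal{W}^{T}\ast_{t}\mathcal{W}$ has strictly positive entries so that the real square root supplied by Lemma \ref{lem1} genuinely exists. I would prove this by passing to the Fourier domain: positive-definiteness of $\mathcal{A}$ means every frontal slice $\hat{\mathcal{A}}^{(i)}$ is Hermitian positive definite, and $\hat{\mathcal{C}}^{(i)}-\hat{\mathcal{W}}^{(i)*}\hat{\mathcal{W}}^{(i)}$ is exactly the Schur complement of the leading block $\hat{\tilde{\mathcal{A}}}^{(i)}$ inside $\hat{\mathcal{A}}^{(i)}$, which is a strictly positive scalar for each $i$; folding back gives the required positivity.

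I expect the principal obstacle to be the careful bookkeeping of the t-transpose at the block level, specifically justifying that $\mathcal{S}^{T}=\mathcal{S}$ and that the Schur-complement tensor $\mathcal{C}-\mathcal{W}^{T}\ast_{t}\mathcal{W}$ is entrywise positive rather than merely positive definite as an operator. Both points are cleanest in the Fourier domain, where the entire statement decouples into the classical bordered Cholesky factorization applied slicewise to the Hermitian positive definite matrices $\hat{\mathcal{A}}^{(i)}$, after which folding back recovers the tensor identity; I would use this Fourier reduction as the safeguard that the formal block computation above is legitimate.
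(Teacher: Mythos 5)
Your overall strategy---verify $\mathcal{L}\ast_{t}\mathcal{L}^{T}=\mathcal{A}$ blockwise, then pass to the Fourier domain for well-definedness---is sound and in substance parallels the paper's proof, which likewise reduces everything to the classical bordered Cholesky factorization applied slicewise to the Fourier slices $\hat{\mathcal{A}}^{(i)}$. The organizational difference is that the paper argues top-down: it first invokes the existence of a t-product Cholesky factorization of $\mathcal{A}$ (citing the literature), writes it slicewise in the Fourier domain, and identifies its blocks with $\hat{\tilde{\mathcal{L}}}^{(i)}$, $\hat{\mathcal{W}}^{(i)}$ and $\hat{l}_{22}^{(i)}$; you construct the candidate factor from the given pieces and verify the product directly, which is more self-contained since it never needs the existence result. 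Your block algebra (the four blocks, $(\tilde{\mathcal{L}}\ast_{t}\mathcal{W})^{T}=\mathcal{W}^{T}\ast_{t}\tilde{\mathcal{L}}^{T}$, and $\mathcal{S}^{T}=\mathcal{S}$ by symmetry of the Schur-complement tube) is legitimate for the t-product.

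There is, however, one genuinely incorrect step: the claim that positivity of the Fourier-domain Schur complements $\hat{\mathcal{C}}^{(i)}-\hat{\mathcal{W}}^{(i)*}\hat{\mathcal{W}}^{(i)}>0$ ``folds back'' to strictly positive spatial entries of $\mathcal{C}-\mathcal{W}^{T}\ast_{t}\mathcal{W}$. The inverse DFT of a positive sequence need not be positive: the tube whose Fourier coefficients are $(1,10,10)$ has spatial entries $(7,-3,-3)$. So the hypothesis of Lemma \ref{lem1} (positive entries) cannot be verified this way, and in general it simply fails to hold. The correct repair---which is what the paper implicitly does when it sets $\hat{l}_{22}^{(i)}=\sqrt{\hat{\mathcal{C}}^{(i)}-\hat{\mathcal{W}}^{(i)T}\hat{\mathcal{W}}^{(i)}}$ slice by slice---is to define the square root in the Fourier domain: each Fourier slice of the Schur complement is a real, strictly positive scalar (real because $\hat{\mathcal{A}}^{(i)}$ is Hermitian, positive because it is positive definite); take scalar square roots there and transform back. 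Conjugate symmetry of the resulting slices guarantees a real, symmetric tube $\mathcal{S}$ with $\mathcal{S}\ast_{t}\mathcal{S}=\mathcal{C}-\mathcal{W}^{T}\ast_{t}\mathcal{W}$, which is all your $(2,2)$ block needs. Note that this is also the only reading under which Lemma \ref{lem1} itself is correct: entrywise square roots of the \emph{spatial} frontal slices do not square to $\mathcal{B}$ under the t-product, because the t-product of tubes is a circular convolution, not an entrywise product. (The paper's own closing sentence, asserting that $\mathcal{C}-\mathcal{W}^{T}\ast_{t}\mathcal{W}$ is positive ``due to its coefficient are positive,'' suffers from the same confusion, so once you relocate the square root to the Fourier domain your argument is actually on firmer footing than the published one.)
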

\begin{proof}
Let \( P = I_3 \times \dots \times I_N \). Consider the Fourier transform of the tensor \( \mathcal{A} \), denoted by \( \hat{\mathcal{A}} \). The \( i \)-th frontal slice of \( \hat{\mathcal{A}} \) is \( \hat{\mathcal{A}}^{(i)} \) for \( i = 1, \dots, P \).

Since \( \mathcal{A} \) is symmetric positive definite, there exists a lower triangular tensor \( \mathcal{L} \) such that \cite{REICHEL2022}:
\[ \mathcal{A} = \mathcal{L} \ast_t \mathcal{L}^T. \]
In the Fourier domain, this becomes:
\[ \hat{\mathcal{A}}^{(i)} = \hat{\mathcal{L}}^{(i)} \hat{\mathcal{L}}^{(i)T}, \quad \text{for } i = 1, \dots, P.\]

with \( \hat{\mathcal{A}}^{(i)} \):
\[ \hat{\mathcal{A}}^{(i)} = \begin{pmatrix}
	\hat{\tilde{\mathcal{A}}}^{(i)} & \hat{\mathcal{V}}^{(i)} \\
	\hat{\mathcal{V}}^{(i)T} & \hat{\mathcal{C}}^{(i)}
\end{pmatrix}, \]
where \( \hat{\tilde{\mathcal{A}}}^{(i)} \), \( \hat{\mathcal{V}}^{(i)} \), and \( \hat{\mathcal{C}}^{(i)} \) are the Fourier transforms of \( \tilde{\mathcal{A}} \), \( \mathcal{V} \), and \( \mathcal{C} \) at frequency \( i \).

Assuming the Cholesky factorization of \( \hat{\tilde{\mathcal{A}}}^{(i)} \) is \( \hat{\tilde{\mathcal{L}}}^{(i)} \hat{\tilde{\mathcal{L}}}^{(i)T} \), we solve for \( \hat{\mathcal{W}}^{(i)} \):
\[
\hat{\tilde{\mathcal{L}}}^{(i)} \hat{\mathcal{W}}^{(i)} = \hat{\mathcal{V}}^{(i)}.
\]

Since \( \hat{\mathcal{A}}^{(i)} \) is positive definite, the Schur complement
\[
\hat{\mathcal{C}}^{(i)} - \hat{\mathcal{V}}^{(i)T} (\hat{\tilde{\mathcal{A}}}^{(i)})^{-1} \hat{\mathcal{V}}^{(i)} > 0.
\]
Because \( \hat{\mathcal{W}}^{(i)} = (\hat{\tilde{\mathcal{L}}}^{(i)})^{-1} \hat{\mathcal{V}}^{(i)} \), we have:
\[
\hat{\mathcal{W}}^{(i)T} \hat{\mathcal{W}}^{(i)} = \hat{\mathcal{V}}^{(i)T} (\hat{\tilde{\mathcal{A}}}^{(i)})^{-1} \hat{\mathcal{V}}^{(i)}.
\]
Then 
\[
\hat{l}_{22}^{(i)} = \sqrt{\hat{\mathcal{C}}^{(i)} - \hat{\mathcal{W}}^{(i)T} \hat{\mathcal{W}}^{(i)}}
.\]
Then  \( \hat{\mathcal{L}}^{(i)} \) is given by \cite{Elad}:
\[
\hat{\mathcal{L}}^{(i)} = \begin{pmatrix}
	\hat{\tilde{\mathcal{L}}}^{(i)} & 0 \\
	\hat{\mathcal{W}}^{(i)T} & \hat{l}_{22}^{(i)}
\end{pmatrix}.
\]

Thus, the Cholesky factorization of \( \mathcal{A} \) is:
\[
\mathcal{A} = \mathcal{L} \ast_t \mathcal{L}^T,
\]
with:
\[
\mathcal{L} = \begin{pmatrix}
	\tilde{\mathcal{L}} & 0 \\
	\mathcal{W}^T & \sqrt{\mathcal{C} - \mathcal{W}^T \ast_t \mathcal{W}}
\end{pmatrix},
\]
where \( \tilde{\mathcal{L}} \) and \( \mathcal{W} \) satisfy:
\[
\tilde{\mathcal{L}} \ast_t \mathcal{W} = \mathcal{V}.
\]
The tensor $\mathcal{C} - \mathcal{W}^T \ast_t \mathcal{W}$ is positive due to its coefficient are positive, which implies that the square tensor $\sqrt{\mathcal{C} - \mathcal{W}^T \ast_t \mathcal{W}}$ exists from  Lemma \ref{lem1}.
\end{proof}

\subsection{Dictionary update}
Projected stochastic gradient descent (PSGD) is used to implement the dictionary update step:
\begin{equation}
\mathcal{D}^{(t)}=\arg\min\limits_{\mathcal{D}}\sum_{k=1}^{t}L(\mathcal{X}^{(k)},\mathcal{D}).
\end{equation}
The general step of PSGD for the multidimensional problem is given by:
\begin{equation}
	\mathcal{D}^{(t)}=Proj_{\mathcal{C}}(\mathcal{D}^{(t-1)}-\eta^{(t)}\nabla \mathcal{F}(\mathcal{D}^{(t-1)};\mathcal{Y}^{(t)})),
\end{equation}
Such that $\eta^{(t)}$ is the step size or the learning rate, in \cite{liu} they use a diminishing learning rate $\eta^{(t)}=\dfrac{a}{b+t}$, where $a$ and $b$ have to be well chosen in a data set-dependent way.
In the following proposition, we present the gradient of the function $\mathcal{F}$.
\begin{proposition}\label{Gradient}
	Consider the function $\mathcal{F}$ as defined in \eqref{F}.$\mathcal{F}$ is differentiable and the gradient is given by 
	\[ \nabla \mathcal{F}(\mathcal{D})=
	(\mathcal{D}\ast_{t}\mathcal{X}-\mathcal{Y})\ast_{t}\mathcal{X}^{T}.\]
	
\end{proposition}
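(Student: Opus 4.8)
The plan is to recognize $\mathcal{F}$ as the value function of the inner minimization and to apply a Danskin-type (envelope) argument. Writing $\mathcal{X}^{\star}(\mathcal{D})$ for the minimizer of $\mathcal{L}(\cdot,\mathcal{D};\mathcal{Y})$, I would first note that $\mathcal{L}$ is convex in $\mathcal{X}$, being the sum of the convex quadratic t-product term and the convex $\ell_{1}$ norm, and that it is coercive in $\mathcal{X}$ because the $\ell_{1}$ penalty grows without bound; hence the inner problem attains its minimum on a bounded sublevel set and is well posed. Assuming the minimizer is unique (the standard nondegeneracy hypothesis for such sparse-coding value functions), Danskin's theorem then gives that $\mathcal{F}$ is differentiable with $\nabla_{\mathcal{D}}\mathcal{F}(\mathcal{D}) = \nabla_{\mathcal{D}}\mathcal{L}(\mathcal{X}^{\star}(\mathcal{D}),\mathcal{D};\mathcal{Y})$, the partial gradient taken with $\mathcal{X}$ frozen at the optimum. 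This is the crucial reduction: it lets me discard the dependence of $\mathcal{X}^{\star}$ on $\mathcal{D}$, since that contribution vanishes by first-order optimality of the inner problem.

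Next I would compute the partial gradient of $\mathcal{L}$ with respect to $\mathcal{D}$ with $\mathcal{X}$ held fixed. Since the penalty $\lambda\Vert\mathcal{X}\Vert_{1}$ does not involve $\mathcal{D}$, only the smooth term $g(\mathcal{D}) = \tfrac{1}{2}\Vert\mathcal{D}\ast_{t}\mathcal{X}-\mathcal{Y}\Vert_{F}^{2}$ contributes. Expanding the directional derivative along a direction $\mathcal{H}$ using bilinearity of the t-product and of the Frobenius inner product yields
\[
Dg(\mathcal{D})[\mathcal{H}] = \langle \mathcal{D}\ast_{t}\mathcal{X}-\mathcal{Y},\, \mathcal{H}\ast_{t}\mathcal{X}\rangle.
\]
The key step is then the t-product adjoint identity
\[
\langle \mathcal{R},\, \mathcal{H}\ast_{t}\mathcal{X}\rangle = \langle \mathcal{R}\ast_{t}\mathcal{X}^{T},\, \mathcal{H}\rangle ,
\]
valid for compatible tensors, which I would establish by passing to the Fourier domain: under the DFT along the higher modes the t-product becomes frontal-slice-wise matrix multiplication, the tensor transpose becomes the conjugate transpose of each slice, and the Frobenius inner product is preserved up to a fixed positive scaling. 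Applying this identity with $\mathcal{R} = \mathcal{D}\ast_{t}\mathcal{X}-\mathcal{Y}$ identifies the gradient as $(\mathcal{D}\ast_{t}\mathcal{X}-\mathcal{Y})\ast_{t}\mathcal{X}^{T}$, the claimed formula, with $\mathcal{X}=\mathcal{X}^{\star}(\mathcal{D})$.

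I expect the main obstacle to be the differentiability justification rather than the algebra. Establishing the adjoint identity is routine once one works slice-wise in the Fourier domain, but the envelope step genuinely requires that the inner minimizer $\mathcal{X}^{\star}$ be unique, which is not automatic because of the nonsmooth $\ell_{1}$ term. To close this gap I would either invoke the standard generic-uniqueness result for $\ell_{1}$-regularized least squares or state uniqueness of $\mathcal{X}^{\star}$ as a working hypothesis, as is customary in the online dictionary learning literature.
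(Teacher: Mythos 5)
Your proposal is correct, and it takes a genuinely more careful route than the paper. The paper's own proof never engages with the minimization over $\mathcal{X}$ at all: it asserts differentiability because $\mathcal{F}$ is ``a composition of differentiable applications,'' then expands $\mathcal{F}(\mathcal{D}+\mathcal{H})-\mathcal{F}(\mathcal{D})$ with the \emph{same} fixed $\mathcal{X}$ appearing in both terms, obtaining $\langle\mathcal{H},(\mathcal{D}\ast_{t}\mathcal{X}-\mathcal{Y})\ast_{t}\mathcal{X}^{T}\rangle+o(\Vert\mathcal{H}\Vert_{F})$. In effect the paper computes the partial gradient $\nabla_{\mathcal{D}}\mathcal{L}(\mathcal{X},\mathcal{D};\mathcal{Y})$ at frozen $\mathcal{X}$, which is not by itself the gradient of the value function $\mathcal{F}(\mathcal{D})=\min_{\mathcal{X}}\mathcal{L}(\mathcal{X},\mathcal{D};\mathcal{Y})$, since the minimizer $\mathcal{X}^{\star}(\mathcal{D})$ moves with $\mathcal{D}$. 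Your Danskin/envelope step is precisely the missing justification that the partial gradient at the optimum equals $\nabla\mathcal{F}$, and your explicit flagging of the uniqueness hypothesis on $\mathcal{X}^{\star}$ (not automatic because of the $\ell_{1}$ term) is the honest price of that step; this is the same route taken in the online dictionary learning literature the paper builds on (e.g.\ Mairal et al.). The two arguments share their algebraic core --- the quadratic expansion and the t-product adjoint identity $\langle\mathcal{R},\mathcal{H}\ast_{t}\mathcal{X}\rangle=\langle\mathcal{R}\ast_{t}\mathcal{X}^{T},\mathcal{H}\rangle$, which the paper also uses but silently; your Fourier-domain justification of it fills in that gap as well. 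In short: the paper's computation is shorter but proves a statement about $\mathcal{L}$ rather than about $\mathcal{F}$ as defined; your version proves the proposition as literally stated, at the cost of one additional (and standard) hypothesis.
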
 
\begin{proof}
	The $t$-product is a linear operation defined in normed space, then $\mathcal{F}$ is a composition of differentiable applications then it is differentiable. Let's compute the gradient of this function:
	\begin{align*}
		\mathcal{F}(\mathcal{D}+\mathcal{H})-\mathcal{F}(\mathcal{D})&=\dfrac{1}{2}\Vert \mathcal{D}\ast_{t}\mathcal{X}+\mathcal{H}\ast_{t}\mathcal{X}-\mathcal{Y}\Vert_{F}^{2}+ \lambda\Vert \mathcal{X}\Vert_{1}-\dfrac{1}{2}\Vert\mathcal{D}\ast_{t}\mathcal{X}-\mathcal{Y}\Vert_{F}^{2}-\lambda\Vert \mathcal{X}\Vert_{1}\\
		&=<\mathcal{H}\ast_{t}\mathcal{X},\mathcal{D}\ast_{t}\mathcal{X}-\mathcal{Y}>+\dfrac{1}{2}\Vert \mathcal{H}\ast_{t}\mathcal{X}\Vert_{F}^{2}\\
		&=<\mathcal{H},(\mathcal{D}\ast_{t}\mathcal{X}-\mathcal{Y})\ast_{t}\mathcal{X}^{T}>+\epsilon(\Vert \mathcal{H}\Vert_{F}).
	\end{align*}
	Then the gradient is given by:
	\[ \nabla \mathcal{F}(\mathcal{D})=
	(\mathcal{D}\ast_{t}\mathcal{X}-\mathcal{Y})\ast_{t}\mathcal{X}^{T}.	\]
\end{proof}

\begin{algorithm}
	\Large
	\caption{Online projected stochastic gradient descent (OPSGD)}
	\begin{algorithmic}
		\large
		\State \textbf{Initialize:} $\mathcal{D}^{(0)}$ with a random dictionary
		\For{t=1:T}
		\State sample a signal $\mathcal{Y}^{(t)}$.
		\State  solve the sparse coding problem using Algorithm \ref{OMP}.
		\State update the dictionary using:\\
		$\mathcal{D}^{(t)}=Proj_{C}(\mathcal{D}^{(t-1)}-\eta^{(t)}(\mathcal{D}^{(t)}\ast_{t}\mathcal{X}-\mathcal{Y})\ast_{t}\mathcal{X}^{T}).$
		\EndFor
		
	\end{algorithmic}
	\label{ODL}
\end{algorithm}

\subsection{Second order method}
Let's consider the problem of the dictionary update
\begin{equation}
	\mathcal{D}^{(t)}=\arg\min\limits_{\mathcal{D}}\dfrac{1}{t}\sum_{k=1}^{t}L(\mathcal{X}^{(k)},\mathcal{D}),
\end{equation}
such that $\mathcal{X}^{(k)}$ is already given from Algorithm \ref{OMP}.\\
 The function 
$\bar{\mathcal{F}}^{(t)}(\mathcal{D})=\dfrac{1}{t}\sum_{k=1}^{t}L(\mathcal{X}^{(k)},\mathcal{D}^{(k)})$ aggregates the past information from previous iterations.
Then we update the dictionary as:
\begin{equation}
\mathcal{D}^{(t)}=\underset{\mathcal{D}}{\arg\min}  \bar{\mathcal{F}}^{(t)}(\mathcal{D})+ Proj_{\mathcal{C}}(\mathcal{D}).
\end{equation}
This problem can be solved efficiently using the proximal gradient or ISTA \cite{beck}.
First, we compute the gradient of the function $\mathcal{F}^{(t)}$, which can be concluded from Proposition \ref{Gradient}.
\begin{equation}
\nabla\bar{\mathcal{F}}^{(t)}=\dfrac{1}{t}\sum_{k=1}^{t}(\mathcal{D}\ast_{t}\mathcal{X}^{(k)}-\mathcal{Y}^{(k)})\ast_{t}(\mathcal{X}^{(k)})^{T}.
\end{equation}
We define $L$ as the Lipschitz constant of the function $\bar{\mathcal{F}}^{(t)}$.
\begin{algorithm}
	\Large
	\caption{Online second order method }
	\begin{algorithmic}
		\large
		\State \textbf{Initialize:} $\mathcal{D}^{(0)}$ with a random dictionary
		\For{t=1:T}
		\State sample a signal $\mathcal{Y}^{(t)}$.
		\State solve the sparse coding problem using Algorithm \ref{OMP}.
		\State $\mathcal{B}^{(t)}=\mathcal{B}^{(t-1)}+\mathcal{Y}^{(t)}\ast_{t}(\mathcal{X}^{(t)})^{T}$
		\State $\mathcal{A}^{(t)}=\mathcal{A}^{(t-1)}+\mathcal{X}^{(t)}\ast_{t}(\mathcal{X}^{(t)})^{T}$
		\State update the dictionary using:\\
		$\mathcal{D}^{(t)}=Proj_{C}(\mathcal{D}^{(t-1)}-{\frac{1}{tL}}(\mathcal{D}^{(t-1)}\ast_{t}\mathcal{A}^{(t)}-\mathcal{B}^{(t)})).$
		\EndFor
		
	\end{algorithmic}
	\label{ODL2}
\end{algorithm}
\section{Multidimensional completion problem}\label{MCO}
Completion refers to filling in or estimating the missing values in a given data. Consider a given data $\mathcal{Y}$ with missing samples and $\Omega$ the set of available samples, and let $\mathcal{W}$ be a binary mask such that:

\[\mathcal{W}_{i_{1}i_{2}\dots i_{N}}=\left\{\begin{array}{ccc}
	1 & \text{if}~~ (i_{1},i_{2},\dots ,i_{N})\in \Omega \\
	0 & 	\text{else} \end{array}\right.
\]
Then we define the completion problem as follows:
\begin{equation}\label{COP}
	\min_{\mathcal{X}} \dfrac{1}{2}\Vert  \mathcal{W}\odot(\mathcal{D}\ast_{t}\mathcal{X}-\mathcal{Y})\Vert_{F} + \lambda\Vert \mathcal{X} \Vert_{1},
\end{equation}
such that $\odot$ is the pointwise product.
The following section presents the method to solve this problem.
                              
\subsection{ Iterative Shrinkage Thresholding with Anderson acceleration}
The Iterative Shrinkage Thresholding Algorithm (ISTA) is a class of first-order methods that can be seen as an extension of classical gradient methods, used to solve linear inverse problems such as \eqref{SC}. This algorithm is known for its simplicity and efficiency in addressing this type of problem. However, ISTA is known to converge slowly and exhibits a sublinear global rate of convergence.
In \cite{beck}, an extrapolation method is used to accelerate the algorithm, resulting in the Fast Iterative Shrinkage-Thresholding Algorithm (FISTA). In this work, we employ a different acceleration method, known as Anderson Acceleration (AA), to improve the performance of ISTA.
The Anderson Acceleration (AA) method for fixed-point iterations, originally developed by D. G. Anderson, was primarily used in electronic structure computations. Despite its effectiveness, this method has not been as widely adopted as other acceleration techniques. In our specific problem, however, it demonstrates superior results in terms of precision compared to FISTA and other acceleration methods, as will be shown in the numerical results section. The new accelerated algorithm is presented in Algorithm \ref{AISTA} and is based on the work in \cite{walker}, \cite{JUNZI}, and \cite{brezinski}.

Let's turn to  the problem \eqref{SC}:
\begin{align}\label{SCFISTA}
	\min_{\mathcal{X}}L(\mathcal{X},\mathcal{D}^{(t)};\mathcal{Y})=& \frac{1}{2}\Vert \mathcal{D}^{(t)}\ast_{t} \mathcal{X}
	-\mathcal{Y}\Vert_{F}^{2} + \lambda\Vert \mathcal{X} \Vert_{1}, \\
	=& G(\mathcal{X}) + \lambda\Vert \mathcal{X}\Vert_{1}
\end{align}

The general solution to the problem \eqref{SCFISTA} is given by
\begin{equation}\label{Thresh}
	\mathcal{X}_{k}=\mathcal{T}_{\lambda t}(\mathcal{X}_{k-1}-t\mathcal{D}^{T}\ast_{t}(\mathcal{D}\ast_{t}\mathcal{X}_{k-1}-\mathcal{Y}), 
\end{equation}
such that  $\mathcal{T}_{\alpha}: \mathbb{R}^{M_{1}\times M_{2}\times\dots\times M_{N}}\rightarrow \mathbb{R}^{M_{1}\times M_{2}\times\dots\times M_{N}} $ is the shrinkage operator defined by:
\begin{equation}
	T_{\alpha}(\mathcal{X})_{i_{1}i_{2}\dots i_{N}}=(\vert \mathcal{X}_{i_{1}i_{2}\dots i_{N}}\vert -\alpha)_{+}sgn(\mathcal{X}_{{i_{1}i_{2}\dots i_{N}}}).
\end{equation}
The parameter $t=\dfrac{1}{L}$,
such that $L$ is the Lipschitz continuous gradient of the function $G$.

\begin{proposition}
	Consider the function
	$G(\mathcal{X})=\frac{1}{2}\Vert \mathcal{D}\ast_{t}\mathcal{X}-\mathcal{B} \Vert_{F}^{2}.$
	$G$ is  differentiable, and the gradient is given by:
	\[ \nabla G(\mathcal{X})=\mathcal{D}^{T}\ast_{t}(\mathcal{D}\ast_{t}\mathcal{X}-\mathcal{B}).\] 
\end{proposition}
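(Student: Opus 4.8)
The plan is to mirror the argument already used in the proof of Proposition \ref{Gradient}, but now perturbing in the variable $\mathcal{X}$ rather than in $\mathcal{D}$. First I would observe that $G$ is a composition of differentiable maps: the t-product $\mathcal{X} \mapsto \mathcal{D} \ast_t \mathcal{X}$ is linear (hence smooth), the translation by $-\mathcal{B}$ is affine, and $\mathcal{Z} \mapsto \tfrac{1}{2}\|\mathcal{Z}\|_F^2$ is differentiable on the normed space equipped with the Frobenius norm. Therefore $G$ is differentiable and it only remains to identify its gradient through a first-order expansion.

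Next I would compute the increment directly. Expanding the squared Frobenius norm gives
\begin{equation*}
G(\mathcal{X}+\mathcal{H}) - G(\mathcal{X}) = \langle \mathcal{D} \ast_t \mathcal{H},\, \mathcal{D} \ast_t \mathcal{X} - \mathcal{B}\rangle + \frac{1}{2}\|\mathcal{D} \ast_t \mathcal{H}\|_F^2 .
\end{equation*}
The decisive step is to transfer $\mathcal{D}$ across the inner product using the adjoint property of the t-product transpose, namely $\langle \mathcal{D} \ast_t \mathcal{H},\, \mathcal{Z}\rangle = \langle \mathcal{H},\, \mathcal{D}^T \ast_t \mathcal{Z}\rangle$. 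This rewrites the linear term as $\langle \mathcal{H},\, \mathcal{D}^T \ast_t (\mathcal{D} \ast_t \mathcal{X} - \mathcal{B})\rangle$, which I would read off as the directional derivative, and hence as the gradient. The residual quadratic term satisfies $\tfrac{1}{2}\|\mathcal{D} \ast_t \mathcal{H}\|_F^2 \leq \tfrac{1}{2}\|\mathcal{D}\|^2\,\|\mathcal{H}\|_F^2 = \epsilon(\|\mathcal{H}\|_F)$, so it is negligible at first order.

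I expect the only genuinely substantive point to be the adjoint identity for the t-product transpose. The cleanest way to justify it is to pass to the Fourier domain, where the t-product block-diagonalizes and the transpose $\mathcal{D}^T$ corresponds to taking the conjugate transpose of each frontal slice $\hat{\mathcal{D}}^{(i)}$; the adjoint relation then reduces, frequency by frequency, to the standard matrix identity $\langle A_i H_i, Z_i\rangle = \langle H_i, A_i^{*} Z_i\rangle$, and summing over frequencies recovers the tensor statement via Parseval. Alternatively, one may simply invoke this identity as the same property already used implicitly in Proposition \ref{Gradient}. Once it is in hand, collecting the linear term yields $\nabla G(\mathcal{X}) = \mathcal{D}^T \ast_t (\mathcal{D} \ast_t \mathcal{X} - \mathcal{B})$, completing the argument.
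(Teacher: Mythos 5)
Your proposal is correct and takes essentially the same route as the paper's proof: expand the increment $G(\mathcal{X}+\mathcal{H})-G(\mathcal{X})$ into a linear term plus $\tfrac{1}{2}\Vert \mathcal{D}\ast_{t}\mathcal{H}\Vert_{F}^{2}$, transfer $\mathcal{D}$ across the inner product using the transpose, and read off the gradient while absorbing the quadratic term into the remainder. The only difference is that you explicitly justify the adjoint identity $\langle \mathcal{D}\ast_{t}\mathcal{H},\mathcal{Z}\rangle=\langle \mathcal{H},\mathcal{D}^{T}\ast_{t}\mathcal{Z}\rangle$ (via the Fourier domain) and bound the residual, both of which the paper uses implicitly without comment.
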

\begin{proof}
	\begin{align}
		G(\mathcal{X}+\mathcal{H})-G(\mathcal{X})&=\dfrac{1}{2}\Vert \mathcal{D}\ast_{t}\mathcal{X}+\mathcal{D}\ast_{t}\mathcal{H}-\mathcal{B}\Vert_{F}^{2}-\dfrac{1}{2}\Vert\mathcal{D}\ast_{t}\mathcal{X}-\mathcal{B}\Vert_{F}^{2}\\
		&=<\mathcal{D}\ast_{t}\mathcal{H},\mathcal{D}\ast_{t}\mathcal{X}-\mathcal{B}>+\dfrac{1}{2}\Vert \mathcal{D}\ast_{t}\mathcal{H}\Vert_{F}^2 \\
		&=<\mathcal{H},\mathcal{D}^{T}\ast_{t}(\mathcal{D}\ast_{t}\mathcal{X}-\mathcal{B})>+ \epsilon(\mathcal{H})\Vert \mathcal{H}\Vert_{F}
	\end{align}
	then the gradient is given by:
	\begin{equation}
		\nabla G(\mathcal{X})=\mathcal{D}^{T}\ast_{t}(\mathcal{D}\ast_{t}\mathcal{X}-\mathcal{B}).
	\end{equation}
\end{proof}
\begin{proposition}
	Lipschitz continuous gradient $L(G)$ of the function $G$ is given by:
	\[ L(G)=\rho(\mathcal{D}^{T} \ast_{t} \mathcal{D}) =  \max_{\lambda \in \Lambda(\mathcal{D}^{T} \ast_{t} \mathcal{D})}\Vert(\lambda) \Vert_{F}. \]
\end{proposition}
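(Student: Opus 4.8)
The plan is to identify the Lipschitz constant of $\nabla G$ with the operator norm of a single, explicitly known linear map, and then to diagonalize that map in the Fourier domain, where the $t$-product decouples into independent matrix blocks.

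First, using the gradient formula from the preceding proposition, for any two tensors $\mathcal{X}_1,\mathcal{X}_2$ I would write
\[
\nabla G(\mathcal{X}_1)-\nabla G(\mathcal{X}_2)=\mathcal{M}\ast_{t}(\mathcal{X}_1-\mathcal{X}_2),\qquad \mathcal{M}:=\mathcal{D}^{T}\ast_{t}\mathcal{D}.
\]
Hence the smallest admissible Lipschitz constant is exactly the operator norm of the linear map $\Phi:\mathcal{Z}\mapsto\mathcal{M}\ast_{t}\mathcal{Z}$ with respect to $\Vert\cdot\Vert_F$. Two structural facts about $\Phi$ drive the rest. The tensor $\mathcal{M}$ is symmetric, since $\mathcal{M}^{T}=(\mathcal{D}^{T}\ast_{t}\mathcal{D})^{T}=\mathcal{D}^{T}\ast_{t}\mathcal{D}=\mathcal{M}$, and it is positive semidefinite because $\langle\mathcal{M}\ast_{t}\mathcal{Z},\mathcal{Z}\rangle=\Vert\mathcal{D}\ast_{t}\mathcal{Z}\Vert_F^{2}\ge 0$. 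Consequently $\Phi$ is self-adjoint and positive semidefinite for the Frobenius inner product, so its operator norm equals its largest eigenvalue rather than merely a singular value. This symmetry is essential: for a non-symmetric $\mathcal{M}$ one would obtain a maximal singular value instead of the spectral radius.

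Next I would pass to the Fourier domain along the tube dimensions. As recalled after Definition \ref{tproduct}, the $t$-product block-diagonalizes, so that $\widehat{\mathcal{M}\ast_{t}\mathcal{Z}}^{(i)}=\hat{\mathcal{M}}^{(i)}\hat{\mathcal{Z}}^{(i)}$ for $i=1,\dots,P$, where each $\hat{\mathcal{M}}^{(i)}$ is Hermitian positive semidefinite. Because the suitably normalized DFT is unitary, Parseval's identity gives $\Vert\mathcal{Z}\Vert_F^{2}=c\sum_{i}\Vert\hat{\mathcal{Z}}^{(i)}\Vert_F^{2}$ for a fixed constant $c$, so $\Phi$ is, up to the isometry $\mathcal{Z}\mapsto\hat{\mathcal{Z}}$, a block-diagonal operator whose $i$-th block is left multiplication by $\hat{\mathcal{M}}^{(i)}$. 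The operator norm of a block-diagonal map is the maximum of the block norms, and the spectral norm of a Hermitian positive semidefinite matrix is its largest eigenvalue; therefore
\[
L(G)=\Vert\Phi\Vert=\max_{1\le i\le P}\lambda_{\max}\!\left(\hat{\mathcal{M}}^{(i)}\right).
\]

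Finally I would match this quantity with $\rho(\mathcal{M})=\max_{\lambda\in\Lambda(\mathcal{M})}\Vert\lambda\Vert_F$. By definition the $t$-spectrum $\Lambda(\mathcal{M})$ is read off from the eigenvalues of the Fourier blocks $\hat{\mathcal{M}}^{(i)}$, which for the symmetric positive semidefinite $\mathcal{M}$ are real and nonnegative; under this identification $\max_{\lambda}\Vert\lambda\Vert_F$ coincides with $\max_i\lambda_{\max}(\hat{\mathcal{M}}^{(i)})$, giving the claim. I expect this last identification to be the main obstacle, since it requires pinning down precisely how the paper's eigenvalue set $\Lambda$ and the norm $\Vert\lambda\Vert_F$ encode the per-frequency scalar eigenvalues, and, relatedly, fixing the normalization constant $c$ so that it cancels and does not rescale the spectral radius. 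The earlier steps are routine once self-adjointness and positive semidefiniteness of $\Phi$ are in hand.
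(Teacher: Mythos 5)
Your proposal is correct, and it takes a genuinely different and more self-contained route than the paper. The paper's own proof is much thinner: it cites \cite{eigentube} for the formula $\rho(\mathcal{D}^{T}\ast_{t}\mathcal{D})=\max_{\lambda\in\Lambda(\mathcal{D}^{T}\ast_{t}\mathcal{D})}\Vert\lambda\Vert_{F}$, writes the gradient difference as $\mathcal{D}^{T}\ast_{t}\mathcal{D}\ast_{t}(\mathcal{X}-\mathcal{Z})$ exactly as you do, and then asserts the key inequality $\Vert\mathcal{D}^{T}\ast_{t}\mathcal{D}\ast_{t}(\mathcal{X}-\mathcal{Z})\Vert_{F}\leq\rho(\mathcal{D}^{T}\ast_{t}\mathcal{D})\,\Vert\mathcal{X}-\mathcal{Z}\Vert_{F}$ in a single line, justified only by the remark that the spectral radius is the largest eigenvalue. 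What you do differently is to actually prove that inequality: you identify the Lipschitz constant with the operator norm of $\mathcal{Z}\mapsto\mathcal{M}\ast_{t}\mathcal{Z}$, verify symmetry and positive semidefiniteness of $\mathcal{M}=\mathcal{D}^{T}\ast_{t}\mathcal{D}$ (both genuinely needed, and neither mentioned in the paper), and block-diagonalize in the Fourier domain to obtain $L(G)=\max_{i}\lambda_{\max}\bigl(\hat{\mathcal{M}}^{(i)}\bigr)$. What your approach buys is that the only remaining step is definitional: matching $\max_{i}\lambda_{\max}\bigl(\hat{\mathcal{M}}^{(i)}\bigr)$ with the eigentube-based quantity $\max_{\lambda\in\Lambda(\mathcal{M})}\Vert\lambda\Vert_{F}$. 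The caution you express there is warranted, and it is precisely what the paper glosses over by citation: if $\lambda$ denotes an eigentube measured by its spatial-domain Frobenius norm, Parseval gives $\Vert\lambda\Vert_{F}\leq\max_{i}\vert\hat{\lambda}^{(i)}\vert$, with equality only when all Fourier coefficients of the tube have equal modulus, so the identification depends entirely on the normalization conventions adopted in \cite{eigentube}. In short, the quantity you compute is the correct Lipschitz constant; the paper takes its equivalence with the eigentube spectral radius on faith from the reference, whereas your argument isolates that equivalence as the one assumption still to be checked.
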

\begin{proof}
	From \cite{eigentube}, the spectral radius is given by 
	\[
	\rho(\mathcal{D}^{T} \ast_{t} \mathcal{D}) =  \max_{\lambda \in \Lambda(\mathcal{D}^{T} \ast_{t} \mathcal{D})}\Vert(\lambda) \Vert_{F}.
	\]
	Although the referenced work addresses only the case of third-order tensors, this property can be generalized to tensors of arbitrary order \(N\).
	We then have
	\begin{align}
		\Vert \nabla G(\mathcal{X}) - \nabla G(\mathcal{Z}) \Vert_{F} &= \Vert \mathcal{D}^{T} \ast_{t} (\mathcal{D} \ast_{t} \mathcal{X} - \mathcal{Y}) - \mathcal{D}^{T} \ast_{t} (\mathcal{D} \ast_{t} \mathcal{Z} - \mathcal{Y}) \Vert_{F} \\
		&= \Vert \mathcal{D}^{T} \ast_{t} \mathcal{D} \ast_{t} (\mathcal{X} - \mathcal{Z}) \Vert_{F} \\
		&\leq \rho(\mathcal{D}^{T} \ast_{t} \mathcal{D}) \Vert \mathcal{X} - \mathcal{Z} \Vert_{F},
	\end{align}
	where the inequality holds because the spectral radius \(\rho(\mathcal{D}^{T} \ast_{t} \mathcal{D})\) is the largest eigenvalue of \(\mathcal{D}^{T} \ast_{t} \mathcal{D}\).
	Thus, the Lipschitz constant for the gradient is given by
	\[
	L(G) = \rho(\mathcal{D}^{T} \ast_{t} \mathcal{D}).
	\]
\end{proof}

Let's consider the fixed point iteration in \eqref{Thresh} such that:
\begin{equation}
	\mathcal{G}(\mathcal{X})=\mathcal{T}_{\frac{\lambda }{L}}(\mathcal{X}-\frac{1}{L}\mathcal{D}^{T}\ast_{t}(\mathcal{D}\ast_{t}\mathcal{X}-\mathcal{Y}).
\end{equation}
AA consist of choosing $\mathcal{X}_{0}$, and $m \geq 1$, we set $\mathcal{G}(\mathcal{X}_{0})=\mathcal{X}_{1}$. For $k=1\dots$, 
we set $m_{k}=min\{m,k\}$, and $\mathcal{F}_{k}=(\nabla F_{k-m_{k}},\dots,\nabla F_{k-1})$, such that $\nabla F_{i}=F_{i+1}-F_{i}$ and $F_{i}=\mathcal{G}(\mathcal{X}_{i})-\mathcal{X}_{i}$. After we determine the tensor $\mathcal{U}$  that solves the least squares problem \eqref{23} using the QR decomposition generalized to p-order tensor presented in Algorithm \ref{T-QR}.
\begin{equation}\label{23}
	\min_{\mathcal{U}}\Vert F_{k}- \mathcal{F}_{k}\ast_{t}\mathcal{U} \Vert_{F}.
\end{equation}
Then the next iteration of the algorithm is
\begin{equation}
	\mathcal{X}_{k+1}=\mathcal{X}_{k}+F_{k}-(\bar{\mathcal{Z}_{k}}+\mathcal{F}_{k})\ast_{t}\mathcal{U},
\end{equation}
such that $\bar{\mathcal{Z}_{k}}=(\nabla \mathcal{X}_{k-m_{k}},\dots,\nabla \mathcal{X}_{k-1})$, and $\nabla \mathcal{X}_{i}=\mathcal{X}_{i+1}-\mathcal{X}_{i}$.
The complete accelerated ISTA-AA algorithm is illustrated in Algorithm \ref{AISTA}.
\begin{algorithm}[H]
	\caption{ ISTA-AA}
	\begin{algorithmic}[1] 
		
		\Require $L=2\lambda_{max}(\mathcal{D}^{T}\ast_{t}\mathcal{D})$, $t=\frac{1}{L}$, $m\geq 1$.
		\State \textbf{Step 0 \\}
		$\mathcal{X}_{0}\in \mathbb{R}^{M_{1}\times M_{2}\times M_{3}}$, $\mathcal{G}_{0}=\mathcal{X}_{1}-\mathcal{X}_{0}$ 
		\State \textbf{Step k  repeat until convergence}
		\State Set $m_{k}=\min\{m,k\}$;
		
		\State Solve  $~~~~\min\limits_{\mathcal{U}}\Vert F_{k}-\mathcal{F}_{k}\ast_{t}\mathcal{U}\Vert_{F}$;
		\State  $\mathcal{X}_{k+1}=\mathcal{X}_{k}+F_{k}-(\mathcal{Z}_{k}+\mathcal{F}_{k})\ast_{t}\mathcal{U}$; 
	\end{algorithmic}
	\label{AISTA}
\end{algorithm}     

\begin{algorithm}
	\Large
	\caption{T-QR }
	\begin{algorithmic}
		\large
		\Require
		A tensor $\mathcal{A}\in \mathbb{R}^{I_{1}\times I_{2}\times \dots \times I_{N}}$
		\Ensure
		$\mathcal{A}=\mathcal{Q}\ast_{t}\mathcal{R}$;
		\State 1: $\rho=I_{3}I_{4}\dots I_{N}$
		\For{$i=1:\rho$}
		\State  $\mathcal{A}=fft(\mathcal{A},[],i)$;
		\EndFor
		\For{$i=1:\rho$} 
		\State $[Q,R]=qr(\mathcal{A}(:,:,i));$ 
		\State  $\mathcal{Q}(:,:,i)=Q$, $\mathcal{R}(:,:i)=R$, 
		\EndFor
		\For{ $i=p,\dots 3$}
		\State	$\mathcal{Q}=ifft(\mathcal{Q},[],i)$;
		$\mathcal{R}=ifft(\mathcal{R},[],i)$;
		\EndFor
		
	\end{algorithmic}
	\label{T-QR}
\end{algorithm}
Let's turn to the problem  \eqref{COP}, we solve this problem using Algorithm \ref{AISTA}, with a slight modification to the thresholding operator:
\begin{equation}
	\mathcal{G}(\mathcal{X}) = \mathcal{T}_{\lambda t}(\mathcal{X} - t \mathcal{D}^{T} \ast_{t} (\mathcal{W} \odot (\mathcal{D} \ast_{t} \mathcal{X} - \mathcal{Y}))).
\end{equation}
This modification ensures that unavailable pixels are ignored during the optimization process.
 
 \subsection{Sufficient Condition for Signal Recovery}
 
 A fundamental concept in sparse recovery theory is the null space property, which provides a sufficient condition for the recovery of sparse signals. In this section, we extend this property to the context of multidimensional tensors. Furthermore, we present both the necessary and sufficient conditions for sparse signal recovery in the tensor framework, as discussed in \cite{Foucart}.  We denote $\mathcal{V}_{S} \in \mathbb{R}^{I_{2} \times 1 \times \dots \times I_{N}}$ as the restriction of $\mathcal{V}$ to indices in $S \subset [I_{2}]$ along the first dimension. We say that $\mathcal{V} \in \mathbb{R}^{I_{2} \times 1 \times \dots \times I_{N}}$ is supported on $S$ if
 \[
 \mathcal{V}_{S} = \mathcal{V}.
 \]
 Let's define the operator associated with the problem \eqref{COP}
\begin{align*}\label{P}
	\mathcal{P}:& \, \mathbb{R}^{I_{2} \times 1 \times I_{3}\times \dots \times I_{N}} \longrightarrow \mathbb{R}^{I_{1} \times 1 \times I_{3} \times \dots \times I_{N}}, \\
	& \mathcal{X} \mapsto \mathcal{W} \odot (\mathcal{D} \ast_{t} \mathcal{X}),
\end{align*}
 $\mathcal{P}$ is a linear operator.
In our problem \eqref{COP} the measurement tensor is defined by $\mathcal{P}$. Then we establish the null space property for the operator $\mathcal{P}$.
\begin{definition}
	Consider a tensor \(\mathcal{A} \in \mathbb{R}^{I_{1} \times I_{2} \times \dots \times I_{N}}\). The null space of \(\mathcal{A}\) under the t-product operation is defined as
	\[
	\text{Null}(\mathcal{A}) = \left\{ \mathcal{V} \in \mathbb{R}^{I_{2} \times 1 \times I_{3} \times \dots \times I_{N}} \ \bigg| \ \mathcal{A} \ast_{t} \mathcal{V} = \mathbf{0}_{\mathbb{R}^{I_{1} \times 1 \times I_{3} \times \dots \times I_{N}}} \right\}.
	\]
\end{definition}  
 \begin{definition}
 	The operator $\mathcal{P} $ is said to satisfy the null space property relative to a set $S \subset [I_{2}]$ if
 	\begin{equation}
 		\Vert \mathcal{V}_{S} \Vert_{1} < \Vert \mathcal{V}_{\bar{S}} \Vert_{1} \quad \text{for all} \quad \mathcal{V} \in Null(\mathcal{P})  \setminus \{0\},
 	\end{equation}
 	where $\bar{S}$ denotes the complement of $S$ in $[I_{2}]$. Additionally, $\mathcal{P}$ is said to satisfy the null space property of order $s$ if it satisfies the null space property relative to any set $S \subset [I_{2}]$ with $|S| \leq s$.
 \end{definition}
 
 \begin{theorem}
 	Let $\mathcal{P}$ be the operator defined above. Then, for every tensor $\mathcal{X} \in \mathbb{R}^{I_{2} \times 1 \times \dots \times I_{N}}$ with support $S$ in the first dimension, $\mathcal{X}$ is the unique solution to problem \eqref{SCFISTA} with $\mathcal{Y} = \mathcal{P}(\mathcal{X})$ if and only if $\mathcal{P}$ satisfies the null space property relative to $S$.
 \end{theorem}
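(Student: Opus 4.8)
The plan is to prove this as the tensor t-product analogue of the classical null space property characterization of exact recovery by $\ell_1$-minimization, in the spirit of the matrix result in \cite{Foucart}. Since the statement is an equivalence, I would establish the two implications separately. Throughout, I would lean on two structural facts. First, $\mathcal{P}$ is linear, so with $\mathcal{Y} = \mathcal{P}(\mathcal{X})$ the feasible set $\{\mathcal{Z} : \mathcal{P}(\mathcal{Z}) = \mathcal{Y}\}$ is exactly the affine set $\mathcal{X} + \mathrm{Null}(\mathcal{P})$, and $\mathrm{Null}(\mathcal{P})$ is a genuine linear subspace. Second, because $S$ and $\bar{S}$ partition the first-dimension index set $[I_{2}]$ into disjoint blocks of slices, the $\ell_{1}$ norm splits additively as $\|\mathcal{V}\|_{1} = \|\mathcal{V}_{S}\|_{1} + \|\mathcal{V}_{\bar S}\|_{1}$ for every $\mathcal{V}$.

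For the sufficiency direction, I would assume $\mathcal{P}$ satisfies the null space property relative to $S$ and fix a tensor $\mathcal{X}$ supported on $S$. Let $\mathcal{Z}$ be any feasible point with $\mathcal{P}(\mathcal{Z}) = \mathcal{P}(\mathcal{X})$ and $\mathcal{Z} \neq \mathcal{X}$; then $\mathcal{V} := \mathcal{X} - \mathcal{Z} \in \mathrm{Null}(\mathcal{P}) \setminus \{0\}$, and since $\mathcal{X}_{\bar S} = 0$ one has $\mathcal{V}_{\bar S} = -\mathcal{Z}_{\bar S}$. Applying the triangle inequality on the block $S$, then the strict null space inequality $\|\mathcal{V}_{S}\|_{1} < \|\mathcal{V}_{\bar S}\|_{1}$, I would chain
\[ \|\mathcal{X}\|_{1} = \|\mathcal{X}_{S}\|_{1} \le \|\mathcal{V}_{S}\|_{1} + \|\mathcal{Z}_{S}\|_{1} < \|\mathcal{V}_{\bar S}\|_{1} + \|\mathcal{Z}_{S}\|_{1} = \|\mathcal{Z}_{\bar S}\|_{1} + \|\mathcal{Z}_{S}\|_{1} = \|\mathcal{Z}\|_{1}, \]
so $\mathcal{X}$ is the strict, hence unique, minimizer.

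For the necessity direction, I would assume every $\mathcal{X}$ supported on $S$ is uniquely recovered and take an arbitrary $\mathcal{V} \in \mathrm{Null}(\mathcal{P}) \setminus \{0\}$. The key move is to split $\mathcal{V} = \mathcal{V}_{S} + \mathcal{V}_{\bar S}$ and set $\mathcal{X} := \mathcal{V}_{S}$, which is supported on $S$, together with $\mathcal{Z} := -\mathcal{V}_{\bar S}$. Linearity and $\mathcal{P}(\mathcal{V}) = 0$ give $\mathcal{P}(\mathcal{Z}) = -\mathcal{P}(\mathcal{V}_{\bar S}) = \mathcal{P}(\mathcal{V}_{S}) = \mathcal{P}(\mathcal{X})$, so $\mathcal{Z}$ is feasible; moreover $\mathcal{Z} \neq \mathcal{X}$, since the disjointly supported $\mathcal{V}_{S}$ and $-\mathcal{V}_{\bar S}$ can coincide only if both vanish, contradicting $\mathcal{V} \neq 0$. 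Uniqueness of $\mathcal{X}$ as the minimizer then forces $\|\mathcal{V}_{S}\|_{1} = \|\mathcal{X}\|_{1} < \|\mathcal{Z}\|_{1} = \|\mathcal{V}_{\bar S}\|_{1}$, which is precisely the null space property relative to $S$.

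I expect the main obstacle to lie not in the inequality chains, which are routine once the setup is fixed, but in the bookkeeping specific to the t-product framework: I must check that the first-dimension restriction $\mathcal{V} \mapsto \mathcal{V}_{S}$ is compatible both with the additive splitting of $\|\cdot\|_{1}$ and with the linearity of $\mathcal{P}$, and that $\mathcal{X} - \mathcal{Z}$, $\mathcal{V}_{S}$, and $\mathcal{V}_{\bar S}$ all land in the correct tensor space $\mathbb{R}^{I_{2} \times 1 \times I_{3} \times \dots \times I_{N}}$. A secondary point I would state explicitly is the interpretation of problem \eqref{SCFISTA}: for the equivalence with exact recovery to hold, the relevant minimization must be read as the equality-constrained basis pursuit $\min_{\mathcal{X}} \|\mathcal{X}\|_{1}$ subject to $\mathcal{P}(\mathcal{X}) = \mathcal{Y}$, since the exact measurement $\mathcal{Y} = \mathcal{P}(\mathcal{X})$ makes the data-fidelity term vanish at the true signal.
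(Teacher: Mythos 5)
Your proof is correct and takes essentially the same route as the paper's: the same construction pitting $\mathcal{V}_{S}$ against $-\mathcal{V}_{\bar{S}}$ for necessity, and the same triangle-inequality-plus-NSP chain for sufficiency, both adapted from the classical matrix argument in \cite{Foucart}. Two of your points are in fact improvements in precision over the paper's write-up: your sufficiency chain $\Vert \mathcal{X}\Vert_{1} \leq \Vert \mathcal{V}_{S}\Vert_{1} + \Vert \mathcal{Z}_{S}\Vert_{1} < \Vert \mathcal{V}_{\bar{S}}\Vert_{1} + \Vert \mathcal{Z}_{S}\Vert_{1} = \Vert \mathcal{Z}\Vert_{1}$ repairs the paper's intermediate step $\Vert \mathcal{X}-\mathcal{Z}\Vert_{1} + \Vert \mathcal{Z}\Vert_{1} \leq \Vert \mathcal{V}_{S}\Vert_{1} + \Vert \mathcal{Z}\Vert_{1}$, which does not literally hold, and your observation that problem \eqref{SCFISTA} must be read as the equality-constrained basis pursuit $\min \Vert \mathcal{X}\Vert_{1}$ subject to $\mathcal{P}(\mathcal{X}) = \mathcal{Y}$ (rather than the penalized formulation) makes explicit an interpretation the paper uses silently in its own proof.
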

 
 \begin{proof}
 	Assume that for a fixed index set $S$, every tensor $\mathcal{X} \in \mathbb{R}^{I_{2} \times 1 \times \dots \times I_{N}}$ supported on $S$ is the unique minimizer of $\Vert \mathcal{Z} \Vert_{1}$ subject to $\mathcal{P}(\mathcal{Z}) = \mathcal{P}( \mathcal{X})$. Then, for any $\mathcal{V} \in \ker \mathcal{P} \setminus \{0\}$, the tensor $\mathcal{V}_{S}$ must be the unique minimizer of $\Vert \mathcal{Z} \Vert_{1}$ subject to $\mathcal{P}(\mathcal{Z})  = \mathcal{P} (\mathcal{V}_{S})$. However, we observe that
 	\[
 	\mathcal{P}(-\mathcal{V}_{\bar{S}}) = \mathcal{P}(\mathcal{V}_{S}) ,
 	\]
 	and $-\mathcal{V}_{\bar{S}} \neq \mathcal{V}_{S}$ because
 	\[
 	\mathcal{P}(-\mathcal{V}_{\bar{S}} + \mathcal{V}_{S}) = 0
 	\]
 	and $\mathcal{V} \neq 0$. This implies that
 	\[
 	\Vert \mathcal{V}_{S} \Vert_{1} < \Vert \mathcal{V}_{\bar{S}} \Vert_{1},
 	\]
 thereby establishing the null space property relative to $S$.
 	
 	Conversely, assume that the null space property relative to $S$ holds. Let $\mathcal{X}$ be a tensor supported on $S$, and let $\mathcal{Z}$ be another tensor such that $\mathcal{P} (\mathcal{Z}) = \mathcal{P} (\mathcal{X})$. Define $\mathcal{V} = \mathcal{X} - \mathcal{Z} \in Null(\mathcal{P}) \setminus \{0\}$. By the null space property, we have
 	\begin{align*}
 		\Vert \mathcal{X} \Vert_{1} &\leq \Vert \mathcal{X} - \mathcal{Z} \Vert_{1} + \Vert \mathcal{Z} \Vert_{1} \\
 		&\leq \Vert \mathcal{V}_{S} \Vert_{1} + \Vert \mathcal{Z} \Vert_{1} \\
 		&< \Vert \mathcal{V}_{\bar{S}} \Vert_{1} + \Vert \mathcal{Z}_{S} \Vert_{1} \\
 		&< \Vert -\mathcal{Z}_{\bar{S}} \Vert_{1} + \Vert \mathcal{Z}_{S} \Vert_{1} \\
 		&< \Vert \mathcal{Z} \Vert_{1},
 	\end{align*}
 	which contradicts the assumption that $\mathcal{X}$ is not the unique minimizer unless $\mathcal{X} = \mathcal{Z}$. Therefore, $\Vert \mathcal{X} \Vert_{1}$ is minimal, establishing the uniqueness of the solution.
 \end{proof}

\section{Numerical results}\label{NumR}
This section presents several numerical results to demonstrate the efficiency of the proposed algorithms in completion and denoising applications. All computations were carried out using MATLAB R2023a on a computer with an Intel(R) Core(TM) i7-7600U CPU @ 2.80GHz and 16 GB of RAM.

\subsection{Application to multidimensional completion problem}
In this experiment, we evaluate the efficiency of the method described in Section \ref{MCO} for the image completion problem. We generate random binary masks that conceal 80\% of the pixels in three images (Sweet Peppers, Girl, and Fruits). The images are then reconstructed using the method from Section \ref{MCO}. We compare our approach with two other methods, TNN \cite{KILMER2011} and RTC \cite{ijcai2018}, which are based on the nuclear norm minimization approach.
To evaluate the effectiveness of our method, we use the root mean square error (RMSE) and the peak signal-to-noise ratio (PSNR), defined as:

\[
RMSE = \frac{\Vert \mathcal{Y}_{rec} - \mathcal{Y}_{ori} \Vert_{F}}{\sqrt{I_{1} I_{2} I_{3}}}, \quad
PSNR = 10 \log_{10} \left(\frac{\max(\mathcal{Y}_{ori})}{RMSE}\right)^{2},
\]
where $\mathcal{Y}_{ori} \in \mathbb{R}^{I_{1} \times I_{2} \times I_{3}}$ represents the original image, and $\mathcal{Y}_{rec} \in \mathbb{R}^{I_{1} \times I_{2} \times I_{3}}$ denotes the reconstructed image.

Figure \ref{tiffany_figure} presents the results of the reconstruction using $20\%$ of the available pixels from the three images (Girl, Sweet peppers, and fruits). We use 627 patches from given images, each of size $20\times 20 \times 3$, to train a dictionary of size $20\times 24\times 20 \times 3$, We then compare our results with the TNN and RTC methods.

\begin{figure}[h]
	\includegraphics[scale=0.34]{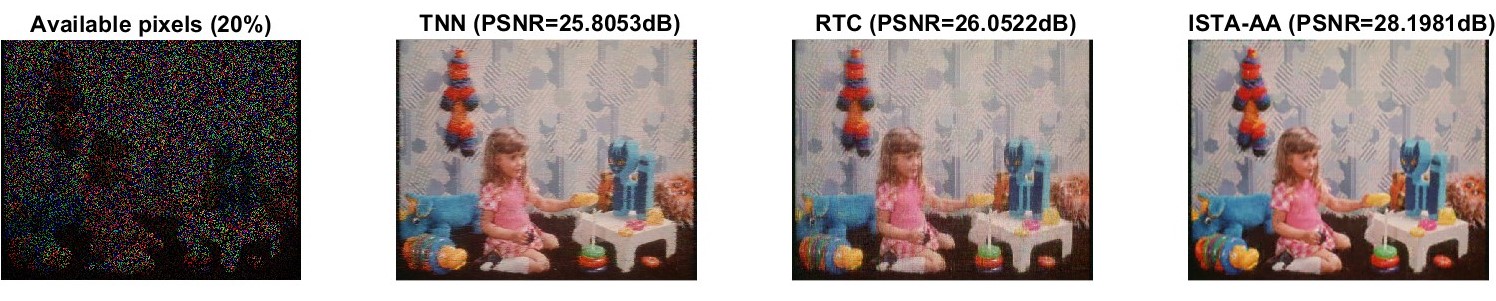}
     \includegraphics[scale=0.34]{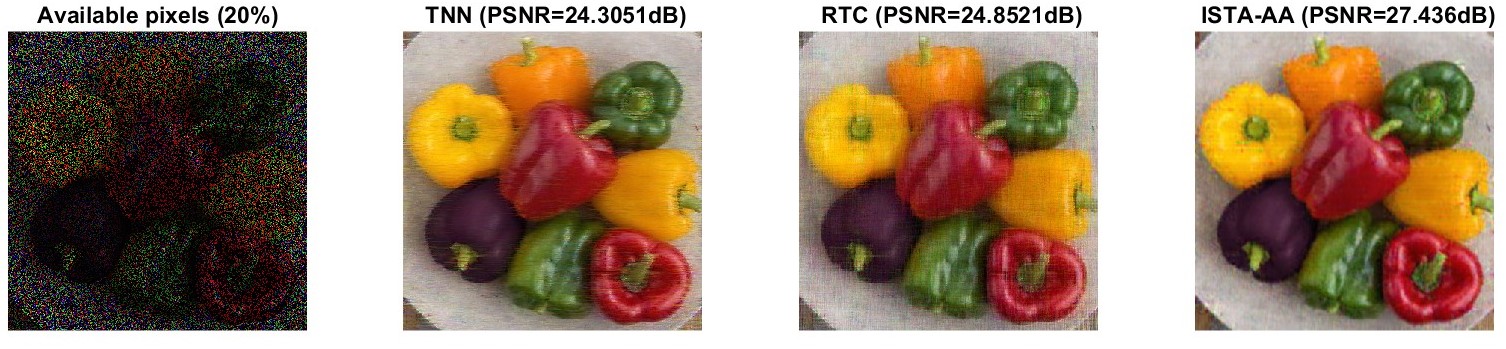}
     \includegraphics[scale=0.34]{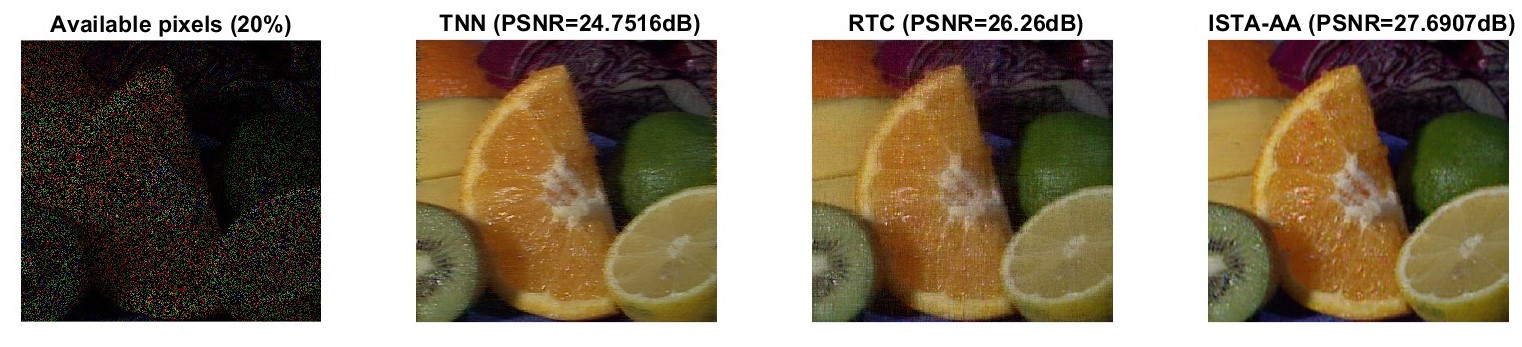}
	\caption{Illustrates the performance of the method on the completion problem applied to Tiffany, Lena, and Peppers images, with  $20 \%$ and of the pixels available.}
	\label{tiffany_figure}
\end{figure}

\subsection{Validation of the algorithm \ref{AISTA}}
In this experiment, we demonstrate the efficiency of Algorithm \ref{AISTA} compared to other acceleration methods such as FISTA, which uses Nesterove acceleration, and the TMPE  acceleration method \cite{bentbib_2025}. We use the same experimental setup for the images of Lena and Tiffany as in the previous example. In the reconstruction step, we apply the three methods—FISTA, ISTA-AA, and ISTA-TMPE and compare their results.
The error is given by:
\[error=\dfrac{\Vert\mathcal{Y}_{org}-\mathcal{Y}_{rec}\Vert_{F}}{\Vert\mathcal{Y}_{org}\Vert_{F}}.\]

Figures \ref{curve figure1} and \ref{curve figure2} show the error function plotted against iterations for the three accelerated algorithms—FISTA, ISTA-AA, and ISTA-TMPE—when applied to two different images.

 ISTA-AA achieves higher precision than both FISTA and ISTA-TMPE, particularly as the number of available pixels is reduced.
\begin{figure}[h]
	\includegraphics[scale=0.3]{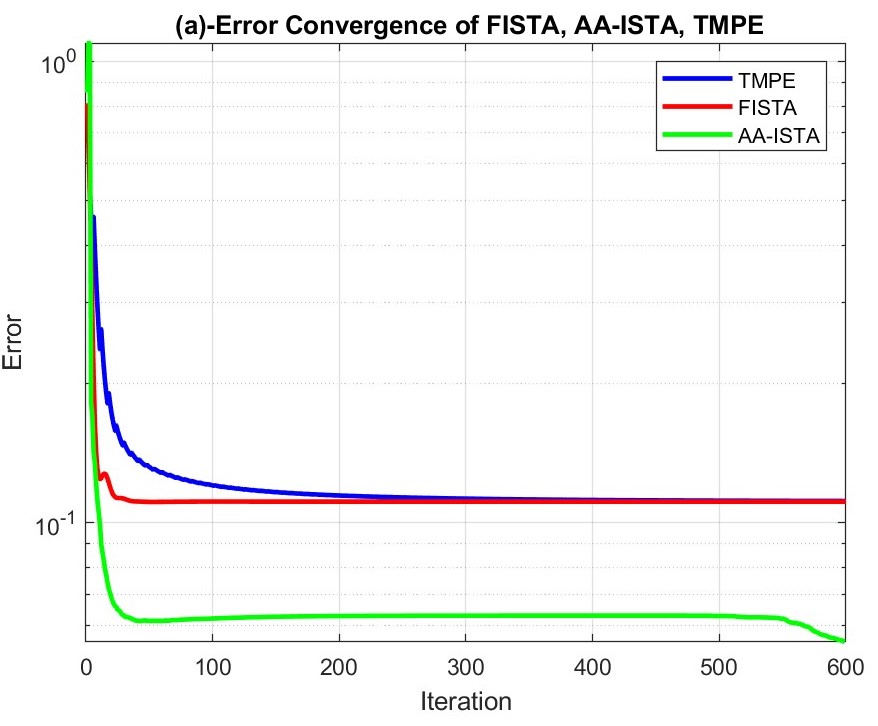}
	\includegraphics[scale=0.3]{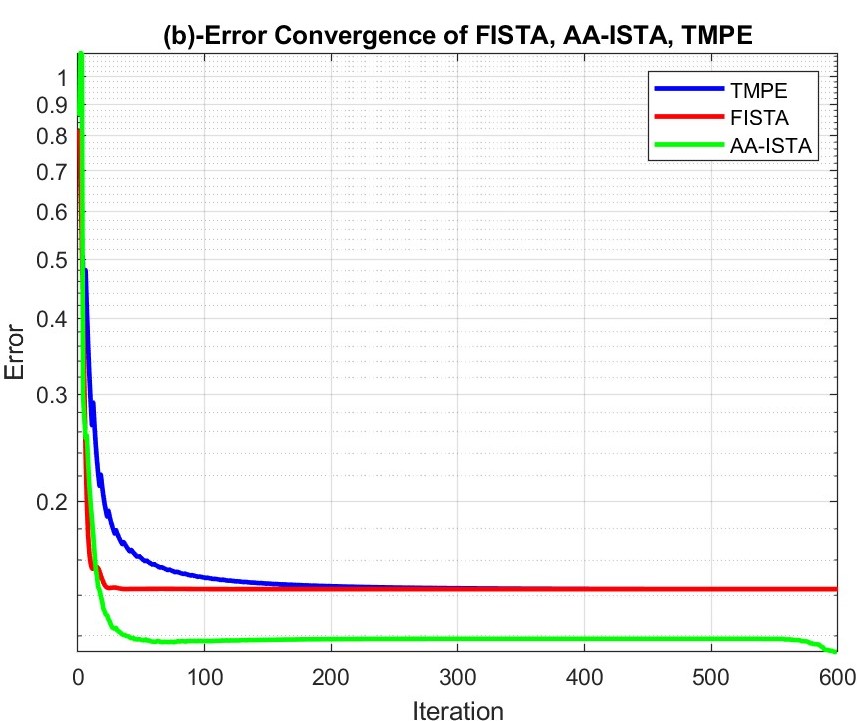}

	\caption{(a)-(b) -shows the error values as a function of iterations for the three methods applied to the completion problem of two different images with $30\%$ of pixels available.}
	
	\label{curve figure1}
\end{figure}
\begin{figure}[h]
	\includegraphics[scale=0.3]{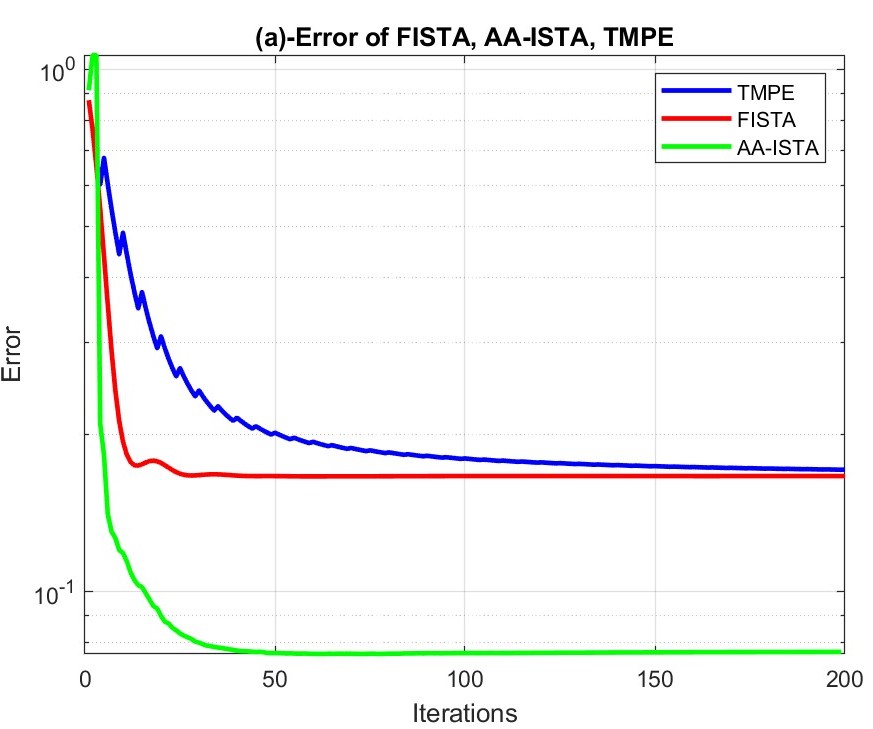}
	\includegraphics[scale=0.3]{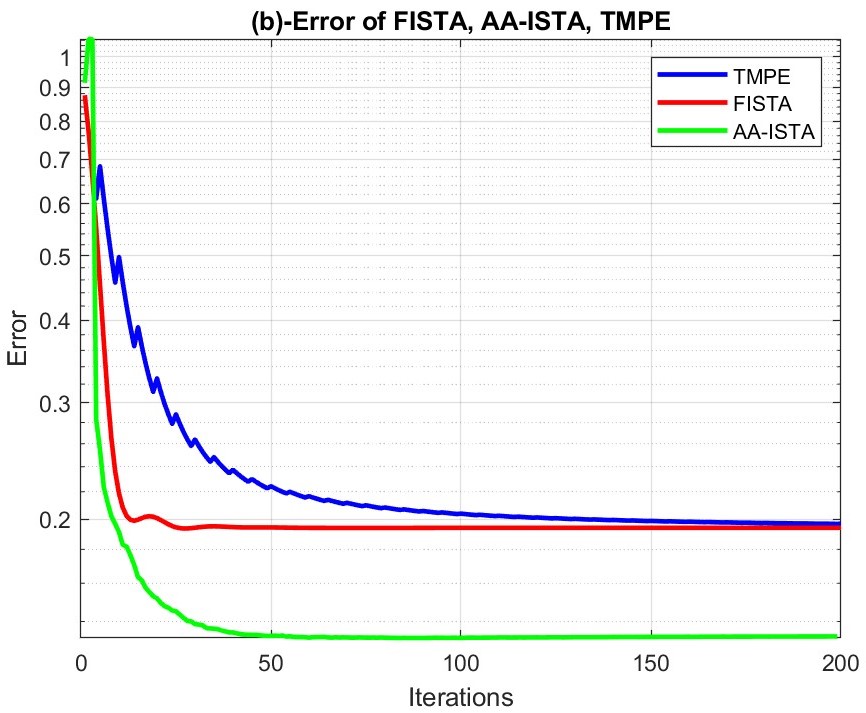}
	
	\caption{((a)-(b) -shows the error values as a function of iterations for the three methods applied to the completion problem of two different images with $20\%$ of pixels available.}
	
	\label{curve figure2}
\end{figure}
\subsection{Comparison between PGSD and the second order method }

In this experiment, we compare dictionaries generated by PGSD  and the second order method using ISTA. We initialize the dictionary atoms using random patches from the training set, and we use algorithms \ref{ODL2} and \ref{ODL} to train two dictionaries from 300 patches of size  $12\times 12\times 3$ and $20\times 20\times 3$  respectively from peppers image.In PGSD algorithm we choose the parameters $\eta^{(t)}=\dfrac{a}{b+t}$ with $a=10$, $b=5$. Figure \ref{curve figure4} shows the results of the learning process. The table above presents the results of image reconstruction using different dictionaries 

\begin{figure}[h]
	\includegraphics[scale=0.18]{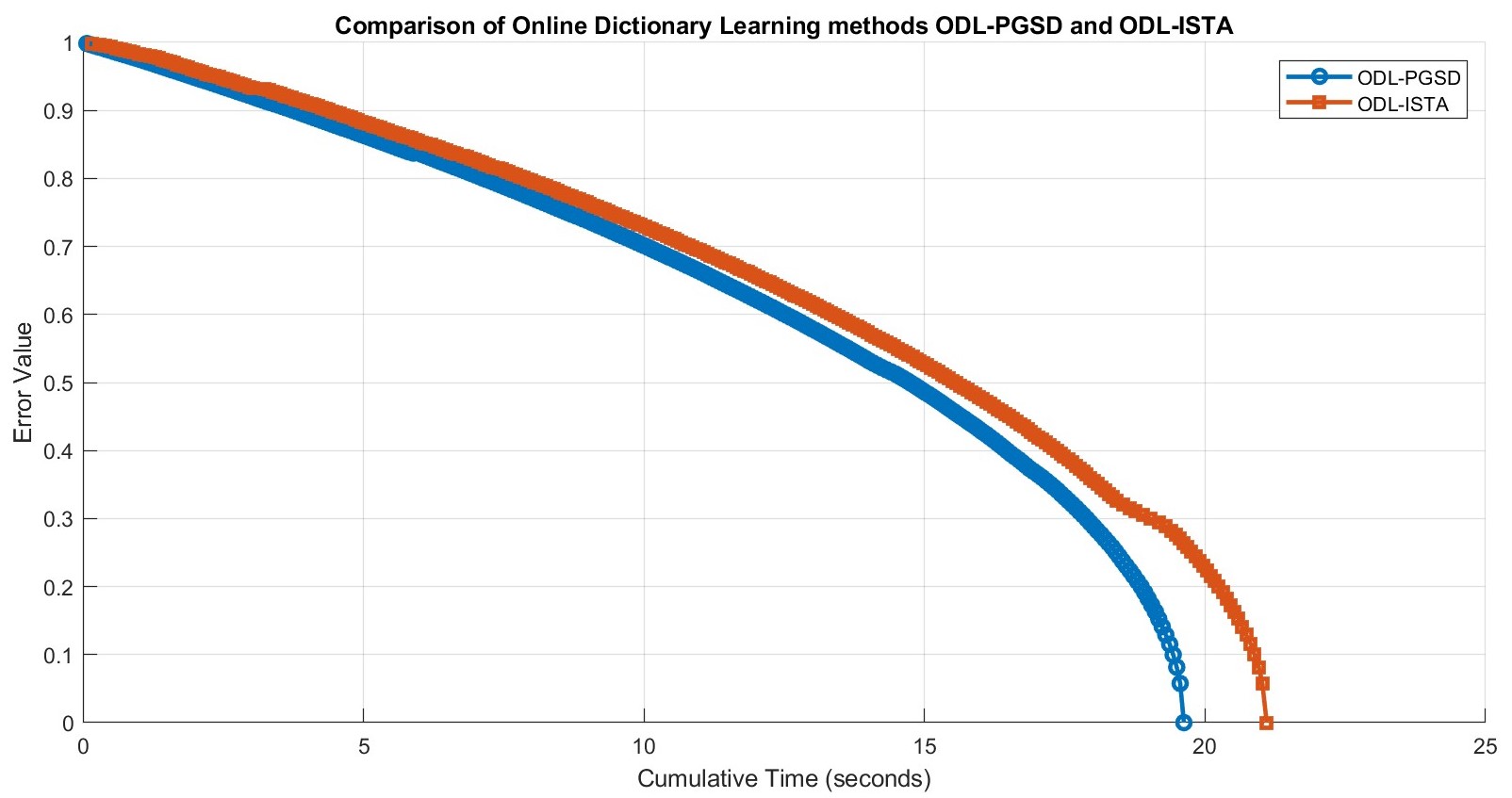}
	\includegraphics[scale=0.18]{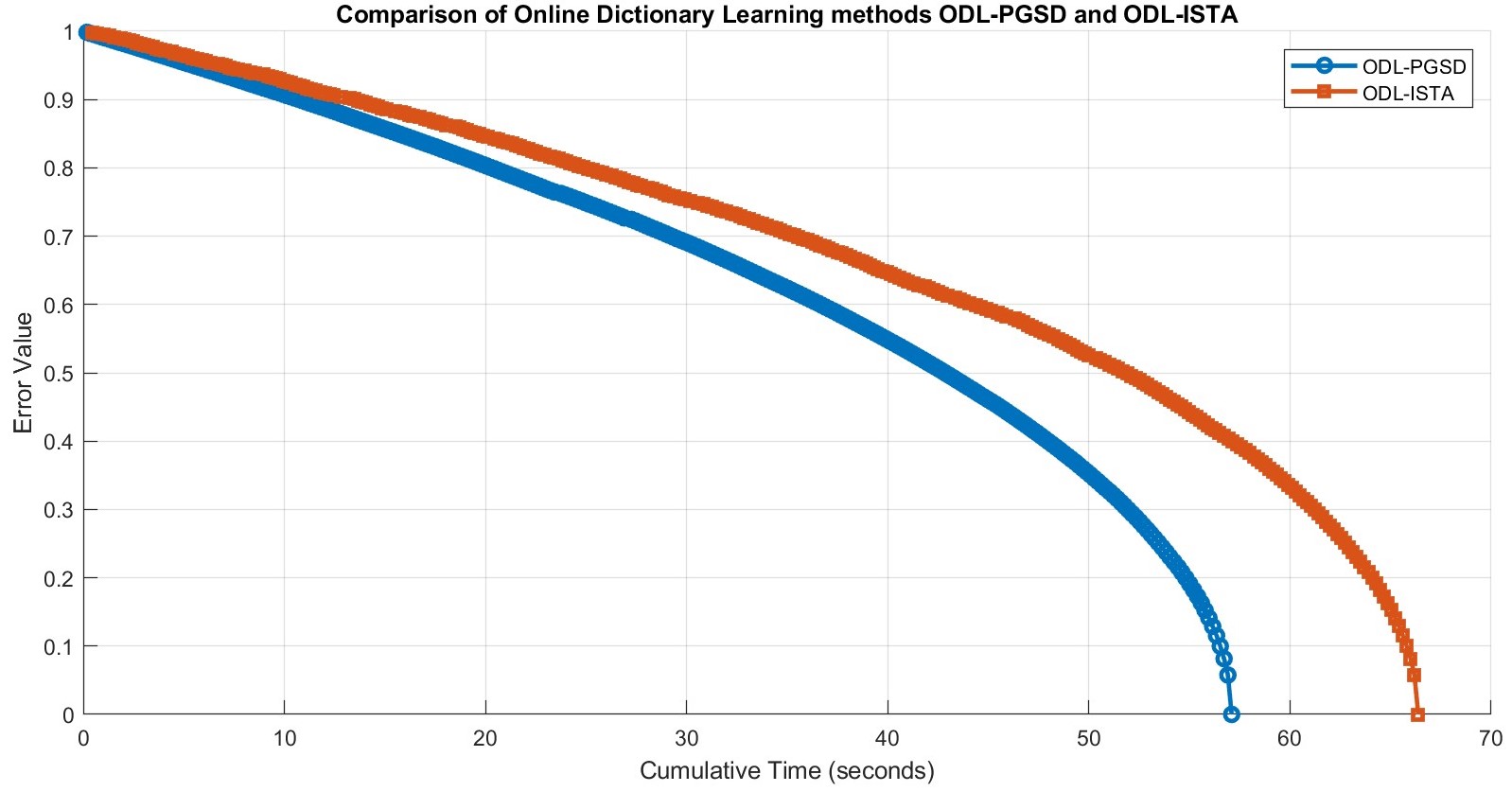}
	
	\caption{(a)-(b) -shows the error values as a function of time for the online dictionary learning methods PGSD and ISTA.}
	
	\label{curve figure4}
\end{figure}

\begin{tabular}{|p{2cm}|p{3cm}|p{2cm}|p{2cm}|p{2cm}|p{2cm}|}
	\hline
	Method &  size of patches & number of patches & Learning time in (s)  &  Error & PSNR  \\
	\hline
	PGSD  &$12\times 12\times 3$ & 450 & 78.15 & 0.13 & 22.4 \\ 
	  & $20\times 20\times 3$ &200 &35.74 &0.086 & 26.15\\
	  & $20\times 20\times 3$ & 450 & 92.92 &0.07 &27.46 \\
	\hline
   ISTA  & $12\times 12\times 3$ & 450 & $85.16$ & 0.10  &  24.51  \\
	& $20\times 20\times 3$&200 & 38.88& 0.085& 26.31\\
	& $20\times 20\times 3$  &450  & 95.98 &0.06 &28.24 \\
	\hline
\end{tabular} \\

Clearly, from the table above and Figure \ref{curve figure4}, the online projected stochastic gradient descent (PSGD) demonstrates a shorter training time compared to ISTA. However, we can observe that dictionary learning using ISTA yields slightly better PSNR and lower error in image reconstruction.

\section{Conclusion}\label{sec8}
We have proposed a more generalized model for dictionary learning using the t-product of p-order tensors. Additionally, we generalized two online algorithms that demonstrate higher performance compared to previous batch methods. To further enhance performance, we accelerated the ISTA algorithm by applying a generalized form of Anderson acceleration, which outperforms other acceleration methods in the context of completion. We demonstrated the effectiveness of our methods through several numerical experiments, particularly when handling multidimensional data.
\section*{Statements and Declarations}

\subsection*{Competing Interests}
The authors declare no competing interests that could have influenced the work presented in this paper.

\subsection*{Funding}
This research was funded by the Moroccan Ministry of Higher Education, Scientific  Research and Innovation and the OCP Foundation through the APRD research program.

\begin{appendices}




\end{appendices}

\bibliography{bibliography-paper}


\end{document}